\theoremstyle{plain}
\newtheorem{thm}{Theorem}
\newtheorem{lem}{Lemma}[section]
\newtheorem{cor}[lem]{Corollary}
\newtheorem{prop}[lem]{Proposition}
\theoremstyle{definition}
\newtheorem{rem}[lem]{Remark}
\newtheorem{ex}[lem]{Example}
\let\ssection=\section
\renewcommand{\section}{\setcounter{equation}{0}\ssection}
\newcommand{\Z}{\mathbb{Z}}
\newcommand{\T}{\mathbb{T}}
\newcommand{\Q}{\mathbb{Q}}
\newcommand{\G}{\mathcal{G}}
\newcommand{\Rc}{\mathcal{R}}
\newcommand{\Sc}{\mathcal{S}}
\newcommand{\coar}{\textup{coar}}
\renewcommand{\ar}{\textup{ar}}
\newcommand{\wt}{\textup{wt}}
\newcommand{\Tr}{\textup{Tr\,}}
\newcommand{\GL}{\mathrm{GL}}
\newcommand{\SL}{\mathrm{SL}}
\begin{document}

\title{Quantum continuants, quantum rotundus and triangulations of annuli}

\author[L. Leclere, S. Morier-Genoud]{Ludivine Leclere and Sophie Morier-Genoud}

\address{Sophie Morier-Genoud, 
Universit\'e de Reims Champagne Ardenne, CNRS, LMR, Reims, France
}
\email{sophie.morier-genoud@univ-reims.fr}

\address{
Ludivine Leclere, Universit\'e de Reims Champagne Ardenne, CNRS, LMR, Reims, France
}
\email{ludivine.leclere@univ-reims.fr}
\keywords{$q$-analogues, continued fractions, modular group, palindromic polynomials, unimodality, continuants, rotundus, triangulations, friezes}

\maketitle

\begin{abstract}
We give enumerative interpretations of the polynomials arising as numerators and denominators of the $q$-deformed rational numbers introduced by Morier-Genoud and Ovsienko. The considered polynomials are quantum analogues of the classical continuants and of their cyclically invariant versions called rotundi. The combinatorial models involve triangulations of polygons and annuli.
We prove that the quantum continuants are the coarea-generating functions of paths in a triangulated polygon and that the quantum rotundi are the (co)area-generating functions of closed loops on a triangulated annulus.
\end{abstract}

\section{Introduction}
Continuants are determinants of tridiagonal matrices. 
They have a long history going back to Euler's work on continued fractions \cite[Chap. 18]{Eul}, see also \cite{Muir}, \cite{Per}, \cite{concrete}.
The name, coming from the fusion of ``continued fraction'' and ``determinant'', was introduced by Thomas Muir in the middle of the 19th century \cite{Muir2}.

The following two types of continuants:
\begin{equation}
\label{ContEq}
K_{n}(a_1,\ldots,a_n):=
\left|
\begin{array}{cccccc}
a_1&1&&&\\[4pt]
-1&a_{2}&1&&\\[4pt]
&\ddots&\ddots&\!\!\ddots&\\[4pt]
&&-1&a_{n-1}&\!\!\!\!\!1\\[4pt]
&&&\!\!\!\!\!-1&\!\!\!\!a_{n}
\end{array}
\right| 
\text{ and }
E_{k}(c_1,\ldots,c_k)
:=
\left|
\begin{array}{cccccc}
c_1&1&&&\\[4pt]
1&c_{2}&1&&\\[4pt]
&\ddots&\ddots&\!\!\ddots&\\[4pt]
&&1&c_{k-1}&\!\!\!\!\!1\\[4pt]
&&&\!\!\!\!\!1&\!\!\!\!c_{k}
\end{array}
\right|
\end{equation}
 are polynomial expressions in the variables $a_i$ and $c_i$ respectively, with the following conventions for empty sets of variables:
$K^{}_{0}()_{}=E_{0}()=1$ and $K^{}_{-1}()_{}=E_{-1}()=0$.

Continuants are related to the numerators and denominators of the regular continued fractions
\begin{eqnarray}\label{CFsEq1}
a_1 + \cfrac{1}{a_2 
         +\cfrac{1}{\ddots + \cfrac{1}{a_n} } } 
          \quad
          &=&\quad
\dfrac{K_{n}(a_1,\ldots,a_n)}{K_{n-1}(a_2,\ldots,a_n)}
\end{eqnarray}
and of the negative signed continued fractions
\begin{eqnarray}\label{CFsEq2}
c_1 - \cfrac{1}{c_2 
          - \cfrac{1}{\ddots - \cfrac{1}{c_k} } } 
          \quad&=&\quad
\dfrac{E_{k}(c_1,\ldots,c_k)}{E_{k-1}(c_2,\ldots,c_{k})},
\end{eqnarray}
also known as Hirzebruch-Jung continued fractions.

The continuants also appear as the entries of $2\times 2$-matrices computed by multiplication of elementary matrices as follows
\begin{equation}
\label{MatEq}
\begin{array}{rlcl}
M^+(a_1,\ldots, a_n):=&
\left(
\begin{array}{cc}
a_1&1\\[4pt]
1&0
\end{array}
\right)
\cdots
\left(
\begin{array}{cc}
a_n&1\\[4pt]
1&0
\end{array}
\right)
&=&\left(
\begin{array}{cc}
K_{n}(a_1,\ldots,a_n)&K_{n-1}(a_1,\ldots,a_{n-1})\\[4pt]
K_{n-1}(a_2,\ldots,a_n)&K_{n-2}(a_2,\ldots,a_{n-1})
\end{array}
\right)\\[18pt]
M(c_1,\ldots, c_k):=&
\left(
\begin{array}{cc}
c_1&-1\\[4pt]
1&0
\end{array}
\right)
\cdots
\left(
\begin{array}{cc}
c_k&-1\\[4pt]
1&0
\end{array}
\right)
&=&\left(
\begin{array}{cc}
E_{k}(c_1,\ldots,c_k)&-E_{k-1}(c_1,\ldots,c_{k-1})\\[4pt]
E_{k-1}(c_2,\ldots,c_{k})&-E_{k-2}(c_2,\ldots,c_{k-1})
\end{array}
\right)
\end{array}
\end{equation}

The following combinations of continuants
\begin{equation}
\label{RotsEq}
\begin{array}{lclcl}
R^+ (a_1,\ldots,a_n)&:=&K_n (a_1,\ldots,a_n)+
K_{n-2} (a_2,\ldots,a_{n-1})&=& \Tr M^+(a_1,\ldots, a_n)
\\[12pt]
R (c_1,\ldots,c_k)&:=&E_{k}(c_1,\ldots,c_k)-E_{k-2}(c_2,\ldots,c_{k-1})
&=& \Tr M(c_1,\ldots, c_k)
\end{array}
\end{equation}
are called {\it rotundi}.
The  rotundus $R (c_1,\ldots,c_k)$ was introduced and studied in \cite{CoOv2}.
 Since rotundi are the traces of the matrices \eqref{MatEq}  they are invariant under cyclic permutations on the tuples of variables $(a_1,\ldots, a_n)$ or $(c_1, \ldots, c_k)$.

For the classical continuants $K_n (a_1,\ldots,a_n)$ several enumerative interpretations are known, e.g. in terms of perfect matchings in snake graphs  \cite{CaSc} or paths in lotuses \cite{PoP}. The continuants $E_{k}(c_1,\ldots,c_k)$ are entries in Coxeter's friezes \cite{Cox} and enumerative interpretations are known in terms of triangulations of polygons \cite{CoCo}, \cite{BCI}, or snake graphs \cite{Pro}, see also \cite{MGblms}. In terms of friezes the rotundus $R (c_1,\ldots,c_k)$ corresponds to ``growth coefficients'' that were studied in \cite{BFPT}, \cite{GMV2}.

In the present paper we study $q$-analogues (also called ``quantum analogues'' or ``$q$-deformations'') of continuants and rotundi and their enumerative interpretations. The $q$-analogues we consider come from the theory of $q$-deformations of rational numbers and of continued fractions initiated in \cite{MGOfmsigma} and \cite{MGOexp}. The combinatorial models involve triangulations of polygons and triangulations of annuli.

In Section \ref{qFar} we review on the notion of $q$-rationals and give enumerative interpretations for the numerators and denominators (which are continuants) using oriented paths in the Farey tessellation.

In Section \ref{contsec} we define the $q$-analogues of the objects introduced in this introduction. We reformulate the results of the previous section in terms of continuants and triangulations of polygons.

In Section \ref{rotund} we prove our main result giving an enumerative interpretation of the $q$-rotundus involving closed loops in a triangulated annulus.

Finally, in Section \ref{fin} we collect some extra facts and observations about $q$-rotundi. In particular we discuss links with matchings, dual graphs, Pfaffians, Euler-Minding algorithm.
\section{$q$-analogues of rationals and Farey tessellation}\label{qFar}

The classical $q$-analogues of integers are the following polynomials in $q$ or $q^{-1}$
\begin{equation}\label{qint}
\begin{array}{lcl}
[n]_{q}&=&\frac{1-q^n}{1-q}=1+q+q^{2}+\cdots+q^{n-1}\;,\\[10pt]
[-n]_{q}&=&\frac{1-q^{-n}}{1-q}=-q^{-1}-q^{-2}-\cdots-q^{-n}\;,
\end{array}
\end{equation}
where $n$ is a positive integer.
We also assume $[0]_{q}=0$.

In \cite{MGOfmsigma} $q$-analogues of rational numbers were introduced, extending the above notion of $q$-integers. The approach is based on combinatorial properties of the rational numbers related to the Farey tessellation and to the continued fraction expansions. The subject has led to further developments in various directions.  Notably there are established  links with knots invariants \cite{LeSc2}, \cite{KoWa}, the modular group and the Picard group \cite{LMGadv}, \cite{OvsC}, combinatorics of posets \cite{McCSS}, \cite{Og}, \cite{OgRa}, Markov numbers and Markov-Hurwitz approximation theory \cite{GiVa},  \cite{Kog}, \cite{LaLa}, \cite{LMGOV}, geometry of Grassmannians \cite{Ove}, triangulated categories \cite{BBL}.

\subsection{Recursive definition of $q$-rationals with Farey tessellation}\label{farat}
In this section, following \cite{MGOfmsigma} we define the $q$-analogues of positive rational numbers
using the Farey tessellation. Details on the Farey tessellation can be found in e.g. \cite{HaWr}.

The $q$-analogues are defined for arbitrary rationals but for our combinatorial purpose we will restrict ourself to positive rationals.
We always assume a rational to be written in the irreducible form, i.e. with coprime positive numerators and denominators. Moreover we add an infinity point represented by the ratio~$\frac10$.

The elements of $\Q_{>0}\cup\{\frac10\}$ are ordered on a horizontal segment drawn in the plane with endpoints $\frac01$ at the left and $\frac10$ at the right. 
The Farey tessellation consists of a collection of triangles whose vertices are the rational numbers and edges are half-circles joining 
$\frac{r}{s}$ and $\frac{r'}{s'}$ whenever $rs'-r's = \pm 1 $. Every triangle is of the following form
\begin{center}
\begin{tikzpicture}
[scale=0.8]
	\draw (6,0) arc (0:180:2);
	\draw (6,0) node[below] {$\frac{r'}{s'}$};
	\draw (2,0) node[below] {$\frac{r}{s}$};
	\draw (6,0) arc (0:180:1);
	\draw (4,0) arc (0:180:1);
	\draw (4,0) node[below] {$\frac{r+r'}{s+s'}$};
\end{tikzpicture}
\end{center}
The \textit{Farey sum} of two rationals $\frac{r}{s}, \frac{r'}{s'}$ is the rational $\frac{r+r'}{s+s'}$ appearing as the median vertex in the triangle.

One defines the $q$-analogue $\left[\frac{r}{s}\right]_q$ of the rational  $\frac{r}{s}$ using the structure of the Farey tessellation. First, one assigns a weight, which is a power of $q$, to each edges of the triangles except for the one joining $\frac01$ and $\frac10$. Then one uses a $q$-deformation of the Farey sum involving the weights of the triangles.
The starting point is given by the triangle 
\begin{center}
\begin{tikzpicture}
[scale=0.8]
	\draw (6,0) arc (0:180:2);
	\draw (6,0) node[below] {$\frac{1}{0}$};
	\draw (2,0) node[below] {$\frac{0}{1}$};
	\draw (6,0) arc (0:180:1);
	\draw (4,0) arc (0:180:1);
	\draw (4,0) node[below] {$\frac{1}{1}$};
	
	\draw (4,2) node[above, scale=0.7] {$$};
	\draw (5,1) node[above, scale=0.7] {$1$};
	\draw (3,1) node[above, scale=0.7] {$1$};
\end{tikzpicture}
\end{center}
and the picture is completed recursively with the following local rule:
\begin{equation}\label{locr}
\,
\end{equation}
\begin{center}
\begin{tikzpicture}
[scale=0.8]
\vspace{-1cm}
	\draw (0,0) arc (0:180:2);
	\draw (0,0) node[below] {$\frac{\Rc'}{\Sc'}$};
	\draw (-4,0) node[below] {$\frac{\Rc}{\Sc}$};
	\draw (0,0) arc (0:180:1);
	\draw (-2,0) arc (0:180:1);
	\draw (-2,0) node[below] {$\frac{\Rc+q^d \Rc'}{\Sc+q^d \Sc'}$};

	\draw (-1,1) node[above, scale=0.7] {$q^d$};
	\draw (-3,1) node[above, scale=0.7] {$1$};
	\draw (-2,2) node[above, scale=0.7] {$q^{d-1}$};
\end{tikzpicture}
\end{center}

This process assigns rational functions $\frac{\Rc}{\Sc}$ to each vertices $\frac{r}{s}$. These are by definition the $q$-analogues $\left[\frac{r}{s}\right]_q$ of the rationals as introduced in  \cite{MGOfmsigma}. 

Figure \ref{qFarey} gives the first steps of the process. The next step would be to add the median points between all consecutive rational points already appearing in the picture. For instance the next step would give $\left[\frac{7}{5}\right]_q$ as the mediant point of $\left[\frac{4}{3}\right]_q$ and $\left[\frac{3}{2}\right]_q$:
$$
 \left[\frac{7}{5}\right]_q=\frac{(1+q+q^{2}+q^{3})+q^{2}(1+q+q^{2})}{(1+q+q^{2})+q^{2}(1+q)}=
 \frac{1+q+2q^{2}+2q^{3}+q^{4}}{1+q+2q^{2}+q^{3}}
$$

\subsection{First properties of $q$-rationals}
We give elementary properties of the polynomials appearing in the denominator and numerator of $\left[\frac{r}{s}\right]_q=\frac{\Rc}{\Sc}$. Note that $\Rc$ and $\Sc$ can be computed recursively independently one from the other. One has the following properties
\begin{itemize}
\item $\Rc$ and $\Sc$ are coprime polynomials in $q$;
\item they have positive integer coefficients;
\item the coefficients of the lowest and of the highest degree terms are equal to 1;
\item the sequences of coefficients are unimodal (this property was conjectured in \cite{MGOfmsigma}, proved in particular cases in \cite{McCSS}, and finally proved in full generality in \cite{OgRa}).
\end{itemize}

In addition when $\frac{r}{s}> 1$ both polynomials $\Rc$ and $\Sc$ in the $q$-deformation have a constant term equal to 1. When $\frac{r}{s}< 1$  a power of $q$ can be factored out of $\Rc$ (see \cite[Prop 2.4]{LMGadv} for a precise formula).

Furthermore when $\frac{r}{s}> 1$ there is a unique continued fraction expansion of the form \eqref{CFsEq1} with positive coefficients $a_{i}$ and even length $n$.
Enumerative interpretations for $\Rc$ and $\Sc$ have been given using different combinatorial models encoded by the sequence of positive coefficients $a_{i}$ (e.g. using closures of graphs \cite{MGOfmsigma}, poset order ideals \cite{McCSS}, snake graphs \cite{Ove}).

We present in the next subsection enumerative interpretations for $\Rc$ and $\Sc$ using paths in the Farey tesselation.

\begin{figure}
\begin{tikzpicture}
[scale=0.4]

	\draw (-19,0) node[below] {$\frac{0}{1}$};
	\draw (17,0) arc (0:180:18);
	\draw (-15,0) arc (0:180:2);
	
	\draw (-16.6,0) node[below] {$\left[\frac{1}{2}\right]_{q}$};
	\draw (-15,0) arc (0:180:1);
	\draw (-17,0) arc (0:180:1);
	\draw(-17,2) node[above, scale=0.6]{$1$};
	\draw(-16,1) node[above, scale=0.55]{$q$};
	\draw(-18,1) node[above, scale=0.55]{$1$};

	\draw (17,0) node[below] {$\frac{1}{0}$};
	\draw (17,0) arc (0:180:16);
	\draw (-15,0) node[below] {$\frac{1}{1}$};
	
	\draw (1,0) arc (0:180:8);
	\draw (1.4,0) node[below] {$\left[\frac{2}{1}\right]_{q}$};
	\draw(1,16) node[above, scale=0.7]{$1$};
	\draw (17,0) arc (0:180:8);
	
	\draw (-7,0) arc (0:180:4);
	\draw (-6.6,0) node[below] {$\left[\frac{3}{2}\right]_{q}$};
	\draw(-7,8) node[above, scale=0.7]{$1$};
	\draw (1,0) arc (0:180:4);
	
	\draw (9,0) arc (0:180:4);
	\draw (9.4,0) node[below] {$\left[\frac{3}{1}\right]_{q}$};
	\draw(9,8) node[above, scale=0.7]{$q$};
	\draw (17,0) arc (0:180:4);
	
	\draw (-11,0) arc (0:180:2);
	\draw (-10.6,0) node[below] {$\left[\frac{4}{3}\right]_{q}$};
	\draw(-11,4) node[above, scale=0.7]{$1$};
	\draw (-7,0) arc (0:180:2);
	\draw(-13,2) node[above, scale=0.6]{$1$};
	\draw(-9,2) node[above, scale=0.6]{$q$};
	
	\draw (-3,0) arc (0:180:2);
	\draw (-2.6,0) node[below] {$\left[\frac{5}{3}\right]_{q}$};
	\draw(-3,4) node[above, scale=0.7]{$q$};
	\draw (1,0) arc (0:180:2);
	\draw(-5,2) node[above, scale=0.6]{$1$};
	\draw(-1,2) node[above, scale=0.6]{$q^2$};
	
	\draw (5,0) arc (0:180:2);
	\draw (5.4,0) node[below] {$\left[\frac{5}{2}\right]_{q}$};
	\draw(5,4) node[above, scale=0.7]{$1$};
	\draw (9,0) arc (0:180:2);
	\draw(3,2) node[above, scale=0.6]{$1$};
	\draw(7,2) node[above, scale=0.6]{$q$};
	
	\draw (13,0) arc (0:180:2);
	\draw (13.4,0) node[below] {$\left[\frac{4}{1}\right]_{q}$};
	\draw(13,4) node[above, scale=0.7]{$q^2$};
	\draw (17,0) arc (0:180:2);
	\draw(11,2) node[above, scale=0.6]{$1$};
	\draw(15,2) node[above, scale=0.6]{$q^3$};

	\draw (1.4,-1.2) node[below] {$\rotatebox[origin=c]{90}{$=$}$};
	\draw (1.4,-2) node[below] {$\frac{1+q}{1}$};

	\draw (-6.6,-1.2) node[below] {$\rotatebox[origin=c]{90}{$=$}$};
	\draw (-6.6,-2) node[below] {$\frac{1+q+q^2}{1+q}$};

	\draw (9.4,-1.2) node[below] {$\rotatebox[origin=c]{90}{$=$}$};
	\draw (9.4,-2) node[below] {$\frac{1+q+q^2}{1}$};

	\draw (-10.6,-1.2) node[below] {$\rotatebox[origin=c]{90}{$=$}$};
	\draw (-10.6,-2) node[below] {$\frac{1+q+q^2+q^3}{1+q+q^2}$};
	
	\draw (-2.6,-1.2) node[below] {$\rotatebox[origin=c]{90}{$=$}$};
	\draw (-2.6,-2) node[below] {$\frac{1+q+2q^2+q^3}{1+q+q^2}$};

	\draw (5.4,-1.2) node[below] {$\rotatebox[origin=c]{90}{$=$}$};
	\draw (5.4,-2) node[below] {$\frac{1+2q+q^2+q^3}{1+q}$};

	\draw (13.4,-1.2) node[below] {$\rotatebox[origin=c]{90}{$=$}$};
	\draw (13.4,-2) node[below] {$\frac{1+q+q^2+q^3}{1}$};

	\draw (-16.6,-1.2) node[below] {$\rotatebox[origin=c]{90}{$=$}$};
	\draw (-16.6,-2) node[below] {$\frac{q}{1+q}$};
		
	\draw[dashed] (-13,0) arc (0:180:1);
	\draw[dashed] (-11,0) arc (0:180:1);
	\draw[dashed] (-9,0) arc (0:180:1);
	\draw[dashed] (-7,0) arc (0:180:1);
	\draw[dashed] (-5,0) arc (0:180:1);
	\draw[dashed] (-3,0) arc (0:180:1);
	\draw[dashed] (-1,0) arc (0:180:1);
	\draw[dashed] (1,0) arc (0:180:1);
	\draw[dashed] (3,0) arc (0:180:1);
	\draw[dashed] (5,0) arc (0:180:1);
	\draw[dashed] (7,0) arc (0:180:1);
	\draw[dashed] (9,0) arc (0:180:1);
	\draw[dashed] (11,0) arc (0:180:1);
	\draw[dashed] (13,0) arc (0:180:1);
	\draw[dashed] (15,0) arc (0:180:1);
	\draw[dashed] (17,0) arc (0:180:1);
\end{tikzpicture}
\caption{Upper part of the Farey tessellation with weights carried by the edges and $q$-deformed rationals labeling the vertices.}\label{qFarey}
\end{figure}
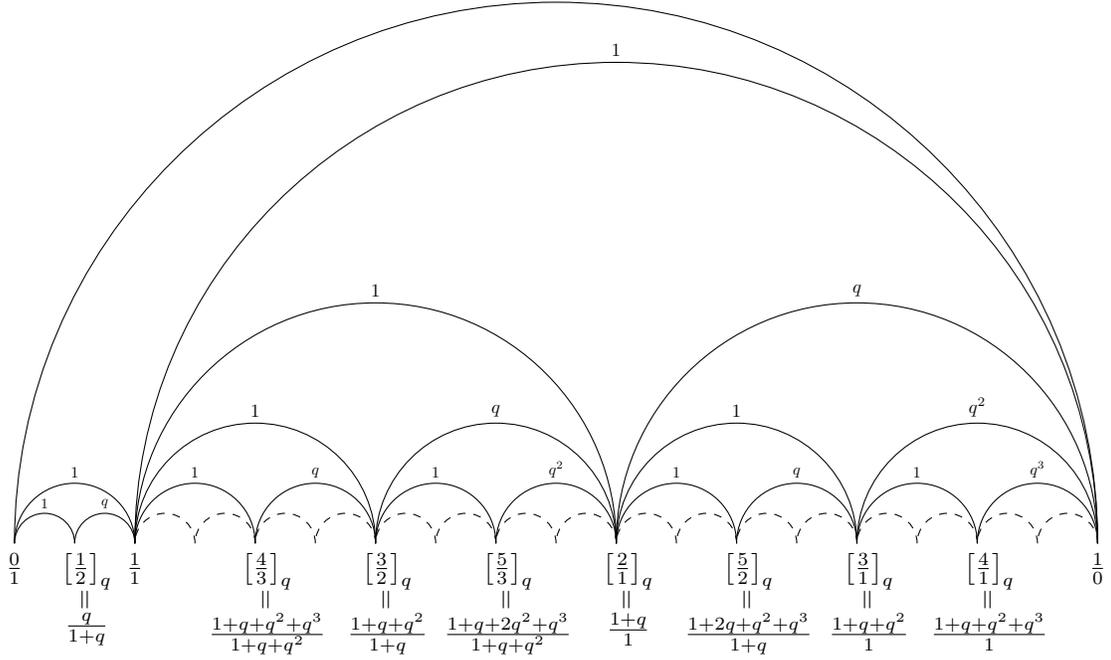

\subsection{Paths in the Farey tessellation}\label{pat}

We assign an orientation of each edges of the Farey tessellation, except for the edge joining $\frac01$ and $\frac10$, so that the following local rule holds in every triangle:
\begin{center}
\begin{tikzpicture}
[scale=0.8]
	\draw (6,0) arc (0:180:2);
	\draw (6,0) arc (0:180:1);
	\draw (4,0) arc (0:180:1);
		\draw [->,line width=2pt, red] (5,1)-- (5.1,1) ;
	\draw [<-,line width=2pt, red] (3,1)-- (3.1,1) ;
\end{tikzpicture}
\end{center}
Except for the vertices labeled by $\frac01$ and $\frac10$, every vertex is the median point of a unique triangle, so that at each vertex there are exactly two outgoing arrows, one oriented to the left and the other to the right.

A \textit{path} in the Farey tessellation is a sequence 
$$\pi : \frac{r_0}{s_0} \xrightarrow{\rho_1} \frac{r_1}{s_1} \xrightarrow{\rho_2}  \ldots \xrightarrow{\rho_{n-1}} \frac{r_{n-1}}{s_{n-1}} \xrightarrow{\rho_n} \frac{r_n}{s_n}$$ such that $\rho_i $ is an edge oriented from $ \frac{r_{i-1}}{s_{i-1}}$  to $\frac{r_i}{s_i} $. We will write for short $\pi: \frac{r_0}{s_0}\to \frac{r_n}{s_n}$ .

We denote by $\wt(\rho)$ the weight assigned to the edge $\rho$ in Section \ref{farat}. We define the \textit{weight} of the path $\pi : \frac{r_0}{s_0} \xrightarrow{\rho_1} \frac{r_1}{s_1} \xrightarrow{\rho_2} \ldots \xrightarrow{\rho_{n-1}} \frac{r_{n-1}}{s_{n-1}} \xrightarrow{\rho_n} \frac{r_n}{s_n}$ by the product
\begin{eqnarray}\label{wtdef}
\wt(\pi):=\wt(\rho_1)\wt(\rho_2)\cdots \wt(\rho_n).
\end{eqnarray}

Let $\frac{r}{s}$ be a rational greater than 1.

 We define the \textit{right-path} of $\frac{r}{s}$ as the shortest path  (in terms of numbers of edges involved) from $\frac{r}{s}$  to $\frac10$, starting with the edge oriented to the right. This path uses only edges oriented to the right which have weight a positive power of $q$. 
 
Similarly, we define the \textit{left-path} of $\frac{r}{s}$ as the shortest path  from $\frac{r}{s}$  to $\frac10$, starting with the edge oriented to the left. This path uses only edges of weight 1 which are all oriented to the left except for the last one joining $\frac11$ to $\frac10$ which is oriented to the right.

Finally we define the \textit{area} and \textit{coarea} of a path $\pi$ starting at $\frac{r}{s}$ as
\begin{eqnarray}\label{ardef}
\ar(\pi)&:=&\#\{\text{triangles enclosed between $\pi$ and the right-path of  $\textstyle\frac{r}{s}$ }\}\\
\label{coardef}
\coar(\pi)&:=&\#\{\text{triangles enclosed between $\pi$ and the left-path of  $\textstyle\frac{r}{s}$ }\}
\end{eqnarray}

\begin{ex}\label{triang75}
For example, in the case of $\frac75$ the left-path is $\frac75 \to \frac43 \to \frac11\to \frac 10$ and the right-path is $\frac75 \to \frac32 \to \frac21\to \frac 10$. They are drawn in orange and blue respectively in the following picture.
\begin{center}
\begin{tikzpicture}
[scale=0.2]
	\begin{small}
	\draw (17,0) node[below] {$\frac{1}{0}$};
	\draw [orange, very thick] (17,0) arc (0:180:16);
	\draw (-15,0) node[below] {$\frac{1}{1}$};
	
	\draw (1,0) arc (0:180:8);
	\draw (1,0) node[below] {$\frac{2}{1}$};
	\draw [blue, very thick] (17,0) arc (0:180:8);
	
	\draw (-7,0) arc (0:180:4);
	\draw (-7,0) node[below] {$\frac{3}{2}$};
	\draw [blue, very thick] (1,0) arc (0:180:4);
	
	\draw [orange, very thick, fill=white] (-11,0) arc (0:180:2);
	\draw (-11,0) node[below] {$\frac{4}{3}$};
	\draw (-7,0) arc (0:180:2);

	\draw [orange, very thick, fill=white] (-9,0) arc (0:180:1);
	\draw (-9,0) node[below] {$\frac{7}{5}$};
	\draw [blue, very thick] (-7,0) arc (0:180:1);

	\draw (1,16) node[above, scale=0.7] {$1$};
	\draw [<-,line width=0.9pt, red] (1,16)-- (0.99,16) ;
	\draw (-7,8) node[above,scale=0.7] {$1$};
	\draw [<-,line width=0.9pt, red] (-7,8)-- (-6.99,8) ;
	\draw (9,8) node[above, scale=0.7] {$q$};
	\draw [<-,line width=0.9pt, red] (9,8)-- (8.99,8) ;
	\draw (-11,4) node[above, scale=0.7] {$1$};
	\draw [<-,line width=0.9pt, red] (-11,4)-- (-10.99,4) ;
	\draw (-3,4) node[above, scale=0.7] {$q$};
	\draw [->,line width=0.9pt, red] (-3,4)-- (-2.99,4) ;
	\draw (-13,2) node[above, scale=0.7] {$1$};
	\draw [<-,line width=0.9pt, red] (-13,2)-- (-12.99,2) ;
	\draw (-9.2,2) node[above,scale=0.7] {$q$};
	\draw [->,line width=0.9pt, red] (-9,2)-- (-8.99,2) ;
	\draw (-10,1) node[below, scale=0.55] {$1$};
	\draw [<-,line width=0.9pt, red] (-10,1)-- (-9.99,1) ;
	\draw (-8,1) node[below, scale=0.55] {$q^2$};
	\draw [->,line width=0.9pt, red] (-8,1)-- (-7.99,1) ;
	\end{small}
\end{tikzpicture}

\end{center}

For our purpose we will only consider paths ending at $\frac11$ or $\frac10$. Examples of such paths and their corresponding areas/coareas are given in Figure \ref{ex5} and \ref{ex7}.

\end{ex}

\subsection{Enumerative interpretations of the $q$-rationals in the Farey tessellation}
We are now ready to formulate two enumerative interpretations of the $q$-rationals in terms of paths in the Farey tessellation. 
The proofs are given in Section \ref{Pf}.
The first interpretation requires the weight assignment on the edges of the tessellation introduced in \S\ref{farat}. 

\begin{thm} \label{thwt}
Let $\frac{r}{s}$ be a rational greater than 1 and let $\frac{\Rc}{\Sc}=\left[\frac{r}{s}\right]_q$ be its $q$-deformation. One has
\begin{eqnarray*}
\Rc&=&\sum_{\pi:\frac{r}{s}\to \frac10}\wt(\pi)\;,\\
\Sc&=&\sum_{\pi:\frac{r}{s}\to \frac11}\wt(\pi)\;,\\
\end{eqnarray*}
where $\wt$ is the weight function defined in \eqref{wtdef}.
\end{thm}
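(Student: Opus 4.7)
The plan is to prove both equalities simultaneously by induction on the depth at which $\frac{r}{s}$ appears in the Farey tessellation, mirroring the recursive construction \eqref{locr} of the $q$-rational.

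The key geometric observation is the following: for any vertex $\frac{r}{s}\notin\{\frac{0}{1},\frac{1}{0}\}$, $\frac{r}{s}$ is the median of a unique triangle $T$ in the tessellation; denote by $\frac{r_L}{s_L}$ and $\frac{r_R}{s_R}$ its two non-median vertices in $T$, with $\frac{r_L}{s_L}<\frac{r_R}{s_R}$. The orientation rule of Section \ref{pat} forces the two outgoing edges at $\frac{r}{s}$ to be precisely the two edges of $T$ joining it to these parents, carrying weights $1$ and $q^d$ respectively, where $d$ is the exponent appearing in \eqref{locr}. Indeed, the unique triangle in which $\frac{r}{s}$ is the median supplies the two outgoing arrows, while every other triangle incident to $\frac{r}{s}$ has it as a non-median vertex and therefore supplies an incoming arrow. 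Consequently, any path $\pi:\frac{r}{s}\to\frac{1}{0}$ (resp.\ $\to\frac{1}{1}$) splits uniquely as either (left edge, weight $1$) followed by a path from $\frac{r_L}{s_L}$, or (right edge, weight $q^d$) followed by a path from $\frac{r_R}{s_R}$.

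This decomposition yields the recursion
\[
\sum_{\pi:\frac{r}{s}\to\frac{1}{0}}\wt(\pi) \;=\; \sum_{\pi_L:\frac{r_L}{s_L}\to\frac{1}{0}}\wt(\pi_L) \;+\; q^{d}\sum_{\pi_R:\frac{r_R}{s_R}\to\frac{1}{0}}\wt(\pi_R),
\]
and its analogue with target $\frac{1}{1}$. On the other hand, the local rule \eqref{locr} says precisely that $\Rc=\Rc_L+q^d\Rc_R$ and $\Sc=\Sc_L+q^d\Sc_R$, where $\frac{\Rc_L}{\Sc_L},\frac{\Rc_R}{\Sc_R}$ are the $q$-deformations of the parents. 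So the two recursions match, and the induction reduces to two base cases. For $\frac{r}{s}=\frac{1}{1}$ (with $\Rc=\Sc=1$), the unique path to $\frac{1}{0}$ is the single edge of weight $1$, and the unique path to $\frac{1}{1}$ is empty, giving sums $1$ and $1$; for $\frac{r}{s}=\frac{1}{0}$ (treated formally with $\Rc=1,\Sc=0$), the only path to $\frac{1}{0}$ is the empty one and no path can reach $\frac{1}{1}$, since $\frac{1}{0}$ has no outgoing edges.

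I expect no substantial obstacle: once the bijection between outgoing edges at a non-extremal vertex and its two parents is established, the argument is essentially a bookkeeping exercise matching two identical recursions with identical base cases. The main care lies in verifying this bijection using the orientation rule and in ensuring that the inductive hypothesis applies to the parents $\frac{r_L}{s_L},\frac{r_R}{s_R}$, which always lie strictly closer to the initial triangle than $\frac{r}{s}$ itself and thus have smaller depth.
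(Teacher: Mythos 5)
Your proposal is correct and follows essentially the same route as the paper: an induction along the recursive mediant construction of the Farey tessellation, decomposing each path starting at the mediant into either the left edge of weight $1$ followed by a path from the left parent, or the right edge of weight $q^{d}$ followed by a path from the right parent, and matching this recursion against the local rule $\Rc''=\Rc+q^{d}\Rc'$ (and likewise for $\Sc$). Your explicit verification of the base cases and of the fact that the two outgoing arrows at a vertex are exactly the edges to its two parents is slightly more careful than the paper's write-up, but the argument is the same.
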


The second interpretation realizes the numerators and denominators of the $q$-rationals as the generating functions for the area of the paths.
\begin{thm} \label{thar}
Let $\frac{r}{s}$ be a rational greater than 1 and let $\frac{\Rc}{\Sc}=\left[\frac{r}{s}\right]_q$ be its $q$-deformation. One has
\begin{eqnarray*}
\Rc&=&\sum_{\pi:\frac{r}{s}\to \frac10}q^{\coar(\pi)}\;,\\
\Sc&=&\sum_{\pi:\frac{r}{s}\to \frac11}q^{\coar(\pi)}\; ,\\
\end{eqnarray*}
where $\coar$ is the coarea of the path defined in \eqref{coardef}.
\end{thm}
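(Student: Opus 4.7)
The plan is to derive Theorem \ref{thar} from Theorem \ref{thwt} by establishing the edge-by-edge identity $\wt(\pi) = q^{\coar(\pi)}$ for every path $\pi$ in the Farey tessellation. The left-path $\lambda_{r/s}$ consists entirely of weight-$1$ edges and encloses no triangles with itself, so both sides agree on it; the content is that each right edge of weight $q^d$ traversed by $\pi$ corresponds to enclosing exactly $d$ additional triangles between $\pi$ and $\lambda_{r/s}$.

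I would proceed by induction on the Stern-Brocot depth of $\frac{r}{s}$, with trivial base case $\frac{r}{s} = \frac{1}{1}$. Let the generating triangle of $\frac{r}{s}$ have left parent $a$, right parent $b$, and right edge of weight $q^d$. A path $\pi$ from $\frac{r}{s}$ begins either with the left edge (\emph{Case L}) or the right edge (\emph{Case R}). In Case L, $\pi = (\frac{r}{s}\to a)\cdot\pi'$; since $\lambda_{r/s} = (\frac{r}{s}\to a)\cdot\lambda_a$, the regions enclosed between $\pi,\lambda_{r/s}$ and between $\pi',\lambda_a$ coincide, so $\coar(\pi) = \coar(\pi')$, and the inductive hypothesis for $a$ closes this case. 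In Case R, $\pi = (\frac{r}{s}\to b)\cdot\pi''$, and the region enclosed between $\pi$ and $\lambda_{r/s}$ decomposes into three pieces: the generating triangle of $\frac{r}{s}$ (one triangle), the region of cardinality $\coar(\pi'')$ between $\pi''$ and $\lambda_b$, and a ``bridge'' region $R$ between $\lambda_a$ and $\lambda_b$ closed by the edge $a\text{-}b$. This yields $\coar(\pi) = 1 + |R| + \coar(\pi'')$, and comparing with $\wt(\pi) = q^d\cdot q^{\coar(\pi'')}$ (by the inductive hypothesis for $b$) reduces the argument to the sub-claim $d = 1 + |R|$.

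The main obstacle is this sub-claim, which I would prove by a secondary induction on the depth of the later-created of $a,b$ in the Stern-Brocot tree. If $b$ is created later then $a$ must be its left parent, so the edge $a\text{-}b$ is the left edge of $b$'s generating triangle (weight $1$, hence $d=1$) and $\lambda_b = (b\to a)\cdot\lambda_a$, which forces $R$ to be degenerate with $|R|=0$. If $a$ is created later then $b$ must be its right parent, so $a\text{-}b$ is the right edge of $a$'s generating triangle (weight $q^{d_a}$, hence $d = d_a+1$) and $\lambda_a = (a\to a_L)\cdot\lambda_{a_L}$, which splits $R$ into the generating triangle of $a$ plus a residual region $R'$ between $\lambda_{a_L}$ and $\lambda_b$ closed by the edge $a_L\text{-}b$. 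The inner hypothesis applied to the pair $(a_L,b)$ gives $d_a = 1 + |R'|$, whence $d = 2 + |R'| = 1 + |R|$. The statement for $\Sc$ follows from the same argument restricted to paths terminating at $\frac{1}{1}$, with the truncation of $\lambda_{r/s}$ before the final edge $\frac{1}{1}\to\frac{1}{0}$ playing the role of reference path.
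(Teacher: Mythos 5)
Your proof is correct and takes essentially the same approach as the paper's: the same induction on the mediant construction of the Farey tessellation, the same case split on whether the first edge of $\pi$ is the left edge (contributing nothing to the coarea) or the right edge of weight $q^d$ (contributing exactly $d$ triangles). The only difference is presentational — you phrase the argument as the pointwise identity $\wt(\pi)=q^{\coar(\pi)}$ and establish the fan-of-$d$-triangles fact by a secondary induction, whereas the paper proves the identity at the level of the sums and reads the count of $d$ extra triangles off a picture.
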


Note  that for $q=1$ the theorems lead to the following immediate corollary.
\begin{cor}
 Let $\frac{r}{s}$ be a rational greater than 1. In the oriented Farey tessellation $r$ is the total number of paths from $\frac{r}{s}$ to $\frac10$ and $s$ is the total number of paths from $\frac{r}{s}$ to $\frac11$.
\end{cor}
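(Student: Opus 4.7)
The plan is to specialize Theorem \ref{thwt} (the same argument applies verbatim to Theorem \ref{thar}) at $q = 1$. Substituting $q = 1$ in the local rule \eqref{locr}, every edge weight $q^d$ collapses to $1$, so $\wt(\pi) = 1$ for each path $\pi$. Consequently the two sums in Theorem \ref{thwt} reduce to cardinalities, yielding
\[
\Rc(1) \;=\; \#\bigl\{\pi : \tfrac{r}{s} \to \tfrac{1}{0}\bigr\}, \qquad \Sc(1) \;=\; \#\bigl\{\pi : \tfrac{r}{s} \to \tfrac{1}{1}\bigr\}.
\]

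It remains to identify $\Rc(1)$ with $r$ and $\Sc(1)$ with $s$. I would argue this by an easy induction along the recursive construction of the $q$-Farey tessellation. The base triangle carries vertices $\tfrac{0}{1}, \tfrac{1}{1}, \tfrac{1}{0}$, for which the claim is tautological. At each inductive step, the $q$-deformed mediant $\tfrac{\Rc + q^d\Rc'}{\Sc + q^d\Sc'}$ of two adjacent vertices specializes at $q=1$ to $\tfrac{\Rc(1) + \Rc'(1)}{\Sc(1) + \Sc'(1)}$, which by the induction hypothesis equals $\tfrac{r + r'}{s + s'}$, precisely the classical Farey mediant. Since the $q$-numerator and $q$-denominator are coprime polynomials (as recalled in \S2.2), their values at $q=1$ inherit the coprimality present in the classical Farey tessellation, so $\Rc(1) = r+r'$ and $\Sc(1) = s+s'$ are indeed the numerator and denominator of the new vertex in lowest terms.

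There is no substantial obstacle here: the content of the corollary is simply that the classical Farey numerator and denominator count paths of the two types, and this falls out immediately from either enumerative theorem by the weight-specialization above. If one prefers to avoid the induction entirely, one can invoke the known identity $[\tfrac{r}{s}]_{q=1} = \tfrac{r}{s}$ from \cite{MGOfmsigma} together with coprimality of $\Rc$ and $\Sc$ to reach the same conclusion.
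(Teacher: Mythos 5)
Your proposal is correct and matches the paper's approach exactly: the paper derives the corollary simply by specializing Theorems \ref{thwt} and \ref{thar} at $q=1$, where every weight becomes $1$ and the construction reduces to the classical Farey mediant, so the sums become path counts equal to $r$ and $s$. One small caveat: the side remark that coprimality of the polynomials $\Rc,\Sc$ is ``inherited'' by their values at $q=1$ is not a valid inference in general (coprime polynomials can share a common factor after evaluation), but it is also unnecessary, since your induction already identifies $\Rc(1)/\Sc(1)$ with the classical Farey mediant, which is automatically in lowest terms because adjacent Farey vertices satisfy $rs'-r's=\pm1$.
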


\begin{ex}Consider the case of 
$$
 \left[\frac{7}{5}\right]_q=
 \frac{1+q+2q^{2}+2q^{3}+q^{4}}{1+q+2q^{2}+q^{3}}.
$$
All the paths from $\frac75$ to $\frac11$ or to $\frac10$ are depicted in Figure \ref{ex5}. One can check that the power in the weight of a path coincides with the coarea of the path and they produce the polynomials in the ratio of $\left[\frac{7}{5}\right]_q$. Also note that any path ending at $\frac11$ can be extended to $\frac10$ just by including the top
edge, and this does not change the area.

\end{ex}


\begin{figure}[h!]
\begin{tikzpicture}
[scale=0.15]

	\draw (17,0) arc (0:180:16);
	\draw (-15,0) node[below] {$\frac{1}{1}$};
	
	\draw (1,0) arc (0:180:8);
	\draw (17,0) arc (0:180:8);
	
	\draw (-7,0) arc (0:180:4);
	\draw (1,0) arc (0:180:4);
	
	\draw [color=green, very thick] (-11,0) arc (0:180:2);
	\draw [very thin] (-11,0) arc (0:180:2);
	\draw (-7,0) arc (0:180:2);

	\draw [green,, very thick] (-9,0) arc (0:180:1);
	\draw [very thin] (-9,0) arc (0:180:1);

	\draw (-9,0) node[below] {$\frac{7}{5}$};
	\draw (-7,0) arc (0:180:1);
	
	\draw [<-,line width=0.8pt, red] (1,16)-- (0.99,16) ;
	\draw [<-,line width=0.8pt, red] (-7,8)-- (-6.99,8) ;
	\draw [<-,line width=0.8pt, red] (9,8)-- (8.99,8) ;
	\draw [<-,line width=0.8pt, red] (-11,4)-- (-10.99,4) ;
	\draw [->,line width=0.8pt, red] (-3,4)-- (-2.99,4) ;
	\draw [<-,line width=0.8pt, red] (-13,2)-- (-12.99,2) ;
	\draw [->,line width=0.8pt, red] (-8.75,2)-- (-8.7,2) ;
	\draw [<-,line width=0.8pt, red] (-10,1)-- (-9.99,1) ;
	\draw [->,line width=0.8pt, red] (-8,1)-- (-7.99,1) ;
\end{tikzpicture}
\begin{tikzpicture}
[scale=0.15]

	\draw (17,0) arc (0:180:16);
	\draw (-15,0) node[below] {$\frac{1}{1}$};
	
	\draw (1,0) arc (0:180:8);
	\draw (17,0) arc (0:180:8);
	
	\draw[green, fill=gray!30, very thick]  (-7,0) arc (0:180:4);
	\draw[very thin]  (-7,0) arc (0:180:4);

	\draw (1,0) arc (0:180:4);
	
	\draw[fill=white] (-11,0) arc (0:180:2);
	\draw[green,, very thick, fill=white]  (-7,0) arc (0:180:2);
	\draw[very thin]  (-7,0) arc (0:180:2);

	\draw[green,, very thick, fill=white]  (-9,0) arc (0:180:1);
	\draw[very thin]  (-9,0) arc (0:180:1);
	\draw (-9,0) node[below] {$\frac{7}{5}$};
	\draw[fill=white] (-7,0) arc (0:180:1);
	
	\draw [<-,line width=0.8pt, red] (1,16)-- (0.99,16) ;
	\draw [<-,line width=0.8pt, red] (-7,8)-- (-6.99,8) ;
	\draw [<-,line width=0.8pt, red] (9,8)-- (8.99,8) ;
	\draw [<-,line width=0.8pt, red] (-11,4)-- (-10.99,4) ;
	\draw [->,line width=0.8pt, red] (-3,4)-- (-2.99,4) ;
	\draw [<-,line width=0.8pt, red] (-13,2)-- (-12.99,2) ;
	\draw [->,line width=0.8pt, red] (-8.75,2)-- (-8.7,2) ;
	\draw [<-,line width=0.8pt, red] (-10,1)-- (-9.99,1) ;
	\draw [->,line width=0.8pt, red] (-8,1)-- (-7.99,1) ;
\end{tikzpicture}
\begin{tikzpicture}
[scale=0.15]

	\draw (17,0) arc (0:180:16);
	\draw (-15,0) node[below] {$\frac{1}{1}$};
	
	\draw[green,, very thick, fill=gray!30]  (1,0) arc (0:180:8);
	\draw[very thin]  (1,0) arc (0:180:8);

	\draw (17,0) arc (0:180:8);
	
	\draw (-7,0) arc (0:180:4);
	\draw[green,, very thick, fill=white]  (1,0) arc (0:180:4);
	\draw[very thin]  (1,0) arc (0:180:4);

	\draw[fill=white] (-11,0) arc (0:180:2);
	\draw[green,, very thick, fill=white]  (-7,0) arc (0:180:2);
	\draw[very thin]  (-7,0) arc (0:180:2);

	\draw [green,, very thick] (-9,0) arc (0:180:1);
	\draw [very thin] (-9,0) arc (0:180:1);

	\draw (-9,0) node[below] {$\frac{7}{5}$};
	\draw (-7,0) arc (0:180:1);
	
	\draw [<-,line width=0.8pt, red] (1,16)-- (0.99,16) ;
	\draw [<-,line width=0.8pt, red] (-7,8)-- (-6.99,8) ;
	\draw [<-,line width=0.8pt, red] (9,8)-- (8.99,8) ;
	\draw [<-,line width=0.8pt, red] (-11,4)-- (-10.99,4) ;
	\draw [->,line width=0.8pt, red] (-3,4)-- (-2.99,4) ;
	\draw [<-,line width=0.8pt, red] (-13,2)-- (-12.99,2) ;
	\draw [->,line width=0.8pt, red] (-8.75,2)-- (-8.7,2) ;
	\draw [<-,line width=0.8pt, red] (-10,1)-- (-9.99,1) ;
	\draw [->,line width=0.8pt, red] (-8,1)-- (-7.99,1) ;

\end{tikzpicture}
\vspace*{0.5cm}

\begin{tikzpicture}
[scale=0.15]

	\draw (17,0) arc (0:180:16);
	\draw (-15,0) node[below] {$\frac{1}{1}$};
	
	\draw (1,0) arc (0:180:8);
	\draw (17,0) arc (0:180:8);
	
	\draw[green, very thick, fill=gray!30]  (-7,0) arc (0:180:4);
	\draw[very thin]  (-7,0) arc (0:180:4);

	\draw (1,0) arc (0:180:4);
	
	\draw[fill=white] (-11,0) arc (0:180:2);
	\draw (-7,0) arc (0:180:2);

	\draw[fill=white] (-9,0) arc (0:180:1);
	\draw (-9,0) node[below] {$\frac{7}{5}$};
	\draw[green,, very thick, fill=white]  (-7,0) arc (0:180:1);
	\draw[very thin]  (-7,0) arc (0:180:1);

	\draw [<-,line width=0.8pt, red] (1,16)-- (0.99,16) ;
	\draw [<-,line width=0.8pt, red] (-7,8)-- (-6.99,8) ;
	\draw [<-,line width=0.8pt, red] (9,8)-- (8.99,8) ;
	\draw [<-,line width=0.8pt, red] (-11,4)-- (-10.99,4) ;
	\draw [->,line width=0.8pt, red] (-3,4)-- (-2.99,4) ;
	\draw [<-,line width=0.8pt, red] (-13,2)-- (-12.99,2) ;
	\draw [->,line width=0.8pt, red] (-8.75,2)-- (-8.7,2) ;
	\draw [<-,line width=0.8pt, red] (-10,1)-- (-9.99,1) ;
	\draw [->,line width=0.8pt, red] (-8,1)-- (-7.99,1) ;
\end{tikzpicture}
\begin{tikzpicture}
[scale=0.15]

	\draw (17,0) arc (0:180:16);
	\draw (-15,0) node[below] {$\frac{1}{1}$};
	
	\draw[green,, very thick, fill=gray!30]  (1,0) arc (0:180:8);
	\draw[very thin]  (1,0) arc (0:180:8);

	\draw (17,0) arc (0:180:8);
	
	\draw (-7,0) arc (0:180:4);
	\draw [green,, very thick, fill=white] (1,0) arc (0:180:4);
	\draw [very thin] (1,0) arc (0:180:4);

	\draw[fill=white] (-11,0) arc (0:180:2);
	\draw (-7,0) arc (0:180:2);

	\draw [fill=white](-9,0) arc (0:180:1);
	\draw (-9,0) node[below] {$\frac{7}{5}$};
	\draw[green,, very thick, fill=white]  (-7,0) arc (0:180:1);
	\draw[very thin]  (-7,0) arc (0:180:1);

	\draw [<-,line width=0.8pt, red] (1,16)-- (0.99,16) ;
	\draw [<-,line width=0.8pt, red] (-7,8)-- (-6.99,8) ;
	\draw [<-,line width=0.8pt, red] (9,8)-- (8.99,8) ;
	\draw [<-,line width=0.8pt, red] (-11,4)-- (-10.99,4) ;
	\draw [->,line width=0.8pt, red] (-3,4)-- (-2.99,4) ;
	\draw [<-,line width=0.8pt, red] (-13,2)-- (-12.99,2) ;
	\draw [->,line width=0.8pt, red] (-8.75,2)-- (-8.7,2) ;
	\draw [<-,line width=0.8pt, red] (-10,1)-- (-9.99,1) ;
	\draw [->,line width=0.8pt, red] (-8,1)-- (-7.99,1) ;
\end{tikzpicture}
\caption{The 5 paths (in green) from $\frac75$ to $\frac11$. The triangles shaded in gray  
 count for the coarea of the path whereas the triangles left blank count for the area.
The coarea-generating polynomial $1+q+2q^2+q^3$ corresponds to the denominator of~$\left[\frac75\right]_q$. }\label{ex5}
\end{figure}
\begin{figure}[h!]
\vspace*{1.5cm}

\begin{tikzpicture}
[scale=0.15]

	\draw (17,0) node[below] {$\frac{1}{0}$};
	\draw[green,, very thick]  (17,0) arc (0:180:16);
	\draw[very thin]  (17,0) arc (0:180:16);

	
	\draw (1,0) arc (0:180:8);
	\draw (17,0) arc (0:180:8);
	
	\draw (-7,0) arc (0:180:4);
	\draw (1,0) arc (0:180:4);
	
	\draw[green,, very thick]  (-11,0) arc (0:180:2);
	\draw[very thin]  (-11,0) arc (0:180:2);

	\draw (-7,0) arc (0:180:2);

	\draw[green,, very thick]  (-9,0) arc (0:180:1);
	\draw[very thin]  (-9,0) arc (0:180:1);

	\draw (-9,0) node[below] {$\frac{7}{5}$};
	\draw (-7,0) arc (0:180:1);
	
	\draw [<-,line width=0.8pt, red] (1,16)-- (0.99,16) ;
	\draw [<-,line width=0.8pt, red] (-7,8)-- (-6.99,8) ;
	\draw [<-,line width=0.8pt, red] (9,8)-- (8.99,8) ;
	\draw [<-,line width=0.8pt, red] (-11,4)-- (-10.99,4) ;
	\draw [->,line width=0.8pt, red] (-3,4)-- (-2.99,4) ;
	\draw [<-,line width=0.8pt, red] (-13,2)-- (-12.99,2) ;
	\draw [->,line width=0.8pt, red] (-8.75,2)-- (-8.7,2) ;
	\draw [<-,line width=0.8pt, red] (-10,1)-- (-9.99,1) ;
	\draw [->,line width=0.8pt, red] (-8,1)-- (-7.99,1) ;
\end{tikzpicture}
\begin{tikzpicture}
[scale=0.15]

	\draw (17,0) node[below] {$\frac{1}{0}$};
	\draw [green,, very thick] (17,0) arc (0:180:16);
	\draw [very thin] (17,0) arc (0:180:16);

	
	\draw (1,0) arc (0:180:8);
	\draw (17,0) arc (0:180:8);
	
	\draw[green,, very thick, fill=gray!30]  (-7,0) arc (0:180:4);
	\draw[very thin]  (-7,0) arc (0:180:4);

	\draw (1,0) arc (0:180:4);
	
	\draw[fill=white] (-11,0) arc (0:180:2);
	\draw [green,, very thick, fill=white] (-7,0) arc (0:180:2);
	\draw [very thin] (-7,0) arc (0:180:2);

	\draw [green,, very thick] (-9,0) arc (0:180:1);
	\draw [very thin] (-9,0) arc (0:180:1);

	\draw (-9,0) node[below] {$\frac{7}{5}$};
	\draw (-7,0) arc (0:180:1);
	
	\draw [<-,line width=0.8pt, red] (1,16)-- (0.99,16) ;
	\draw [<-,line width=0.8pt, red] (-7,8)-- (-6.99,8) ;
	\draw [<-,line width=0.8pt, red] (9,8)-- (8.99,8) ;
	\draw [<-,line width=0.8pt, red] (-11,4)-- (-10.99,4) ;
	\draw [->,line width=0.8pt, red] (-3,4)-- (-2.99,4) ;
	\draw [<-,line width=0.8pt, red] (-13,2)-- (-12.99,2) ;
	\draw [->,line width=0.8pt, red] (-8.75,2)-- (-8.7,2) ;
	\draw [<-,line width=0.8pt, red] (-10,1)-- (-9.99,1) ;
	\draw [->,line width=0.8pt, red] (-8,1)-- (-7.99,1) ;
\end{tikzpicture}
\begin{tikzpicture}
[scale=0.15]

	\draw (17,0) node[below] {$\frac{1}{0}$};
	\draw [green,, very thick] (17,0) arc (0:180:16);
	\draw [very thin] (17,0) arc (0:180:16);

	
	\draw (1,0) arc (0:180:8);
	\draw (17,0) arc (0:180:8);
	
	\draw[green,, very thick, fill=gray!30]  (-7,0) arc (0:180:4);
	\draw[very thin]  (-7,0) arc (0:180:4);

	\draw (1,0) arc (0:180:4);
	
	\draw[fill=white] (-11,0) arc (0:180:2);
	\draw (-7,0) arc (0:180:2);

	\draw [fill=white](-9,0) arc (0:180:1);
	\draw (-9,0) node[below] {$\frac{7}{5}$};
	\draw[green,, very thick, fill=white]  (-7,0) arc (0:180:1);
	\draw[very thin]  (-7,0) arc (0:180:1);

	\draw [<-,line width=0.8pt, red] (1,16)-- (0.99,16) ;
	\draw [<-,line width=0.8pt, red] (-7,8)-- (-6.99,8) ;
	\draw [<-,line width=0.8pt, red] (9,8)-- (8.99,8) ;
	\draw [<-,line width=0.8pt, red] (-11,4)-- (-10.99,4) ;
	\draw [->,line width=0.8pt, red] (-3,4)-- (-2.99,4) ;
	\draw [<-,line width=0.8pt, red] (-13,2)-- (-12.99,2) ;
	\draw [->,line width=0.8pt, red] (-8.75,2)-- (-8.7,2) ;
	\draw [<-,line width=0.8pt, red] (-10,1)-- (-9.99,1) ;
	\draw [->,line width=0.8pt, red] (-8,1)-- (-7.99,1) ;
\end{tikzpicture}
	\vspace*{0.5cm}

\begin{tikzpicture}
[scale=0.15]

	\draw (17,0) node[below] {$\frac{1}{0}$};
	\draw[green,, very thick]  (17,0) arc (0:180:16);
	\draw[very thin]  (17,0) arc (0:180:16);

	
	\draw[green,, very thick, fill=gray!30]  (1,0) arc (0:180:8);
	\draw[very thin]  (1,0) arc (0:180:8);

	\draw (17,0) arc (0:180:8);
	
	\draw (-7,0) arc (0:180:4);
	\draw[green,, very thick, fill=white]  (1,0) arc (0:180:4);
	\draw[very thin]  (1,0) arc (0:180:4);

	\draw[fill=white] (-11,0) arc (0:180:2);
	\draw[green,, very thick, fill=white]  (-7,0) arc (0:180:2);
	\draw[very thin]  (-7,0) arc (0:180:2);

	\draw[green,, very thick]  (-9,0) arc (0:180:1);
	\draw[very thin]  (-9,0) arc (0:180:1);

	\draw (-9,0) node[below] {$\frac{7}{5}$};
	\draw (-7,0) arc (0:180:1);
	
	\draw [<-,line width=0.8pt, red] (1,16)-- (0.99,16) ;
	\draw [<-,line width=0.8pt, red] (-7,8)-- (-6.99,8) ;
	\draw [<-,line width=0.8pt, red] (9,8)-- (8.99,8) ;
	\draw [<-,line width=0.8pt, red] (-11,4)-- (-10.99,4) ;
	\draw [->,line width=0.8pt, red] (-3,4)-- (-2.99,4) ;
	\draw [<-,line width=0.8pt, red] (-13,2)-- (-12.99,2) ;
	\draw [->,line width=0.8pt, red] (-8.75,2)-- (-8.7,2) ;
	\draw [<-,line width=0.8pt, red] (-10,1)-- (-9.99,1) ;
	\draw [->,line width=0.8pt, red] (-8,1)-- (-7.99,1) ;
\end{tikzpicture}
\begin{tikzpicture}
[scale=0.15]

	\draw (17,0) node[below] {$\frac{1}{0}$};
	\draw[green,, very thick]  (17,0) arc (0:180:16);
	\draw[very thin]  (17,0) arc (0:180:16);

	
	\draw[green,, very thick, fill=gray!30]  (1,0) arc (0:180:8);
	\draw[very thin]  (1,0) arc (0:180:8);

	\draw (17,0) arc (0:180:8);
	
	\draw (-7,0) arc (0:180:4);
	\draw[green,, very thick, fill=white]  (1,0) arc (0:180:4);
	\draw[very thin]  (1,0) arc (0:180:4);

	\draw[fill=white] (-11,0) arc (0:180:2);
	\draw (-7,0) arc (0:180:2);

	\draw [fill=white](-9,0) arc (0:180:1);
	\draw (-9,0) node[below] {$\frac{7}{5}$};
	\draw[green,, very thick, fill=white]  (-7,0) arc (0:180:1);
	\draw[very thin]  (-7,0) arc (0:180:1);

	\draw [<-,line width=0.8pt, red] (1,16)-- (0.99,16) ;
	\draw [<-,line width=0.8pt, red] (-7,8)-- (-6.99,8) ;
	\draw [<-,line width=0.8pt, red] (9,8)-- (8.99,8) ;
	\draw [<-,line width=0.8pt, red] (-11,4)-- (-10.99,4) ;
	\draw [->,line width=0.8pt, red] (-3,4)-- (-2.99,4) ;
	\draw [<-,line width=0.8pt, red] (-13,2)-- (-12.99,2) ;
	\draw [->,line width=0.8pt, red] (-8.75,2)-- (-8.7,2) ;
	\draw [<-,line width=0.8pt, red] (-10,1)-- (-9.99,1) ;
	\draw [->,line width=0.8pt, red] (-8,1)-- (-7.99,1) ;
\end{tikzpicture}
\begin{tikzpicture}
[scale=0.15]

	\draw (17,0) node[below] {$\frac{1}{0}$};
	\draw [fill=gray!30](17,0) arc (0:180:16);
	
	\draw (1,0) arc (0:180:8);
	\draw[green,, very thick, fill=white]  (17,0) arc (0:180:8);
	\draw[very thin]  (17,0) arc (0:180:8);

	\draw (-7,0) arc (0:180:4);
	\draw[green,, very thick, fill=white]  (1,0) arc (0:180:4);
	\draw[very thin]  (1,0) arc (0:180:4);

	\draw[fill=white] (-11,0) arc (0:180:2);
	\draw [green,, very thick, fill=white] (-7,0) arc (0:180:2);
	\draw [very thin] (-7,0) arc (0:180:2);

	\draw [green,, very thick] (-9,0) arc (0:180:1);
	\draw [very thin] (-9,0) arc (0:180:1);

	\draw (-9,0) node[below] {$\frac{7}{5}$};
	\draw (-7,0) arc (0:180:1);
	
	\draw [<-,line width=0.8pt, red] (1,16)-- (0.99,16) ;
	\draw [<-,line width=0.8pt, red] (-7,8)-- (-6.99,8) ;
	\draw [<-,line width=0.8pt, red] (9,8)-- (8.99,8) ;
	\draw [<-,line width=0.8pt, red] (-11,4)-- (-10.99,4) ;
	\draw [->,line width=0.8pt, red] (-3,4)-- (-2.99,4) ;
	\draw [<-,line width=0.8pt, red] (-13,2)-- (-12.99,2) ;
	\draw [->,line width=0.8pt, red] (-8.75,2)-- (-8.7,2) ;
	\draw [<-,line width=0.8pt, red] (-10,1)-- (-9.99,1) ;
	\draw [->,line width=0.8pt, red] (-8,1)-- (-7.99,1) ;
\end{tikzpicture}
	\vspace*{0.5cm}

\begin{tikzpicture}
[scale=0.15]

	\draw (17,0) node[below] {$\frac{1}{0}$};
	\draw[fill=gray!30] (17,0) arc (0:180:16);
	
	\draw (1,0) arc (0:180:8);
	\draw[green,, very thick, fill=white]  (17,0) arc (0:180:8);
	\draw[very thin]  (17,0) arc (0:180:8);

	\draw (-7,0) arc (0:180:4);
	\draw[green,, very thick, fill=white]  (1,0) arc (0:180:4);
	\draw[very thin]  (1,0) arc (0:180:4);

	\draw[fill=white]  (-11,0) arc (0:180:2);
	\draw (-7,0) arc (0:180:2);

	\draw[fill=white]  (-9,0) arc (0:180:1);
	\draw (-9,0) node[below] {$\frac{7}{5}$};
	\draw[green,, very thick, fill=white]  (-7,0) arc (0:180:1);
	\draw[very thin]  (-7,0) arc (0:180:1);

	\draw [<-,line width=0.8pt, red] (1,16)-- (0.99,16) ;
	\draw [<-,line width=0.8pt, red] (-7,8)-- (-6.99,8) ;
	\draw [<-,line width=0.8pt, red] (9,8)-- (8.99,8) ;
	\draw [<-,line width=0.8pt, red] (-11,4)-- (-10.99,4) ;
	\draw [->,line width=0.8pt, red] (-3,4)-- (-2.99,4) ;
	\draw [<-,line width=0.8pt, red] (-13,2)-- (-12.99,2) ;
	\draw [->,line width=0.8pt, red] (-8.75,2)-- (-8.7,2) ;
	\draw [<-,line width=0.8pt, red] (-10,1)-- (-9.99,1) ;
	\draw [->,line width=0.8pt, red] (-8,1)-- (-7.99,1) ;
\end{tikzpicture}

\caption{The 7 paths (in green) from $\frac75$ to $\frac10$. The triangles shaded in gray  
 count for the coarea of the path whereas the triangles left blank count for the area.
The coarea-generating polynomial $1+q+2q^2+2q^3+q^4$ corresponds to numerator of~$\left[\frac75\right]_q$. }\label{ex7}
\end{figure}


\subsection{Proofs of Theorems \ref{thwt} and \ref{thar}}\label{Pf}
 The theorems will be proved by induction. Suppose that the formula holds for $\frac{\Rc}{\Sc}=\left[\frac{r}{s}\right]_q$ and 
 $\frac{\Rc'}{\Sc'}=\left[\frac{r'}{s'}\right]_q$ where $\frac{r}{s}<\frac{r'}{s'}$ are two rationals linked by an edge in the Farey tessellation. The same formula will follow for $\frac{\Rc''}{\Sc''}=\left[\frac{r''}{s''}\right]_q$ where $\frac{r''}{s''}= \frac{r+r'}{s+s'}$ is the median due to the local rule \eqref{locr}. Indeed, a path starting at $\frac{r''}{s''}$ will either use the left edge of weight 1 and then a path starting at $\frac{r}{s}$ or it will use the right edge of weight $q^d$ and then a path starting at $\frac{r'}{s'}$. 
 
 Hence, one easily computes
\begin{equation*}
\begin{array}{lcccccc}\displaystyle
\sum_{\pi:\frac{r''}{s''}\to \frac10}{\wt(\pi)}&=&\displaystyle \sum_{\pi:\frac{r''}{s''}\to\frac{r}{s}\to\frac10}{\wt(\pi)}&+&\displaystyle\sum_{\pi:\frac{r''}{s''}\to\frac{r'}{s'}\to\frac10}{\wt(\pi)}\\[20pt]
&=&\displaystyle \sum_{\pi:\frac{r}{s}\to\frac10}{\wt(\pi)}&+&\displaystyle\sum_{\pi:\frac{r'}{s'}\to\frac10}q^d{\wt(\pi)}\\[20pt]
&=&\Rc&+&q^d \Rc'\\[10pt]
&=&\Rc''.
\end{array}
\end{equation*}
And similarly for $\Sc$. Theorem \ref{thwt} is proved.

Theorem \ref{thar} is proved in the same inductive way by counting triangles enclosed with respect to the left-paths. One notices that a path starting from $\frac{r''}{s''}$  and using the left edge of weight 1 will not enclose more triangles than the rest of the path starting at $\frac{r}{s}$. Whereas a path starting from $\frac{r''}{s''}$  and using the right edge of weight $q^d$ will always enclose $d$ more extra triangles than the rest of the path starting at $\frac{r'}{s'}$, located under the left-path of $\frac{r'}{s'}$ and the right edge of weight $q^d$, as shown in the following picture.

\begin{center}
\begin{tikzpicture}
[scale=0.4]
	\draw [draw=orange, fill=yellow!50,  very thick] (0,0) arc (0:180:8);
	\draw (0,0) node[below] {$\frac{\mathcal{R}'}{\mathcal{S}'}$};
	\draw (-16,0) node[below] {$\frac{\mathcal{R}_3}{\mathcal{S}_3}$};
	\draw [draw=blue, very thick ](0,0) arc (180:85:8);
	\draw [<-,line width=2pt, red] (-8,8)-- (-7.99,8);
	\draw [<-,line width=2pt, red] (8,8)-- (7.99,8);
	\draw (-8,8) node[above] {$1$};
	\draw (0,0) arc (0:180:4);
	\draw (-8,0) node[below] {$\frac{\mathcal{R}}{\mathcal{S}}$};
	\draw [draw=blue, fill=white, very thick](0,0) arc (0:180:2);
	\draw (-4,0) node[below] {$\frac{\mathcal{R}''}{\mathcal{S}''}$};
	\draw [draw=orange, fill=white, very thick](-4,0) arc (0:180:2);
	\draw (-4,3.7) node[above] {$q^{d-1}$};
	\draw (-2,2) node[above] {$q^d$};
	\draw (-6,2) node[above] {$1$};
	\draw [->,line width=2pt, red] (-4,4)-- (-3.99,4);
	\draw [->,line width=2pt, red] (-2,2)-- (-1.99,2);	
	\draw [<-,line width=2pt, red] (-6,2)-- (-5.99,2);	
	\draw  (0,0) arc (0:100:5);
	\draw (-4.5,4.75) node[above] {$\vdots$};
	\draw (0,0) arc (0:100:6.5);
	\draw (-4.5,6.2) node[above] {$q$};	
	\draw [<-,line width=2pt, red] (-4.5,5)-- (-4.51,5);
	\draw [<-,line width=2pt, red] (-4.5,6.2)-- (-4.51,6.2);	
\end{tikzpicture}
\end{center}

\section{$q$-continuants and triangulated polygons}\label{contsec}

By using continued fraction expansions of the rationals we reformulate the results of the previous section in terms of continuants and triangulations of polygons. We start by defining the $q$-analogues of the objects given in the introduction. These $q$-analogues already appeared in \cite[\S4.2, \S5.2]{MGOfmsigma}. 
\subsection{$q$-continuants}
The $q$-deformations of the continuants in \eqref{ContEq} are polynomials in $q^{\pm 1}$  defined by the following determinants of tridiagonal matrices 
\cite[\S5.2]{MGOfmsigma}:
\begin{equation}
\label{K+Eq}
K^{}_{n}(a_1,\ldots,a_{n})_{q}:=
q^{\sum_i a_{2i} -1}
\left|
\begin{array}{ccccccccc}
[a_1]_{q}&q^{a_{1}}&&\\[6pt]
-1&[a_{2}]_{q^{-1}}&q^{-a_{2}}&\\[4pt]
&-1&[a_3]_{q}&q^{a_{3}}&\\[4pt]
&&-1&[a_{4}]_{q^{-1}}&q^{-a_{4}}&\\[10pt]
&&&\ddots&\ddots&\!\!\ddots\\[10pt]
&&&&-1&\!\!\!\![a_{n}]_{q^{-1}}
\end{array}
\right|
\end{equation}
where $a_i$ are integers and $[a_i]_q$ are as in \eqref{qint} and $n$ is even, and
\begin{equation}
\label{KEq}
E_{k}(c_1,\ldots,c_k)_{q}:=
\left|
\begin{array}{cccccccc}
[c_1]_{q}&q^{c_{1}-1}&&&\\[6pt]
1&[c_{2}]_{q}&q^{c_{2}-1}&&\\[4pt]
&\ddots&\ddots&\!\!\ddots&\\[4pt]
&&1&\!\!\![c_{k-1}]_{q}&q^{c_{k-1}-1}\\[6pt]
&&&\!\!\!\!\!\!1&\!\!\!\!\!\!\!\![c_{k}]_{q}
\end{array}
\right|
\end{equation}
where $c_{i}$ are integers and $[c_i]_q$ are as in \eqref{qint}. 
We recall the following conventions:
$K^{}_{-1}()_{q}=E_{-1}()_q=0$ and $K^{}_{0}()_{q}=E_{0}()_q=1$. 
We also define $K^{}_{n-1}(a_2,\ldots,a_{n})_{q}$  by removing the first row and the first column in the determinant \eqref{K+Eq}.
When the coefficients $a_i$'s and $c_i$'s are positive integers the $q$-continuants are both polynomials in~$q$.

As in the classical case, $q$-continuants are related to continued fractions.
Let us use standard bracket notation for continued fractions: $[a_1,a_2,\ldots, a_{n}]$ stands for the regular continued fraction, i.e. the left hand side of \eqref{CFsEq1} and $\llbracket{}c_1,c_2,\ldots, c_{k}\rrbracket{}$  stands for the negative signed continued fraction, i.e. the left hand side of \eqref{CFsEq2}.
The $q$-analogues of \eqref{CFsEq1} and \eqref{CFsEq2}, when the coefficients $a_i$ and $c_i$ are integers, were introduced in \cite{MGOfmsigma} as follows
\begin{equation}\label{qregfrac}
\begin{array}{lcllcl}
[a_{1}, a_{2} \ldots, a_{n}]_{q}&:=&
[a_1]_{q} + \cfrac{q^{a_{1}}}{[a_2]_{q^{-1}} 
          + \cfrac{q^{-a_{2}}}{[a_{3}]_{q} 
          +\cfrac{q^{a_{3}}}{[a_{4}]_{q^{-1}}
          + \cfrac{q^{-a_{4}}}{
        {\ddots    +\cfrac{}{[a_n]_q^{\pm 1}}}    }
          } }} 
          &=&\dfrac{K_{n}(a_1,\ldots,a_n)_q}{K_{n-1}(a_2,\ldots,a_{n})_q}
          \end{array}
          \end{equation}
\begin{equation}\label{qnegfrac}
\begin{array}{lclcl}
 \llbracket{}c_1,c_2,\ldots, c_{k}\rrbracket{}_{q}&:=&
 [c_1]_{q} - \cfrac{q^{c_{1}-1}}{[c_2]_{q} 
          - \cfrac{q^{c_{2}-1}}{\ddots - \cfrac{q^{c_{k-1}-1}}{[c_{k}]_{q}} } }
          &=&\dfrac{E_{k}(c_1,\ldots,c_k)_q}{E_{k-1}(c_2,\ldots,c_{k})_q}.
\end{array}
\end{equation}

Finally, the $q$-continuants also appear
in $2\times 2$-matrices computed by multiplication of elementary matrices that belong to $\GL(2, \Z[q^{\pm 1}])$. One has
the following $q$-analogues of~\eqref{MatEq} according to \cite[\S4.2]{MGOfmsigma}. 
\begin{equation}
\label{qRegMat}
\begin{array}{lcl}
M^{+}(a_{1},\ldots, a_{n})_{q}&:=&q^{\sum_i a_{2i} }
\begin{pmatrix}
[a_{1}]_{q}&q^{a_{1}}\\[6pt]
1&0
\end{pmatrix}
\begin{pmatrix}
[a_{2}]_{q^{-1}}& q^{-a_{2}}\\[6pt]
1&0
\end{pmatrix}
\cdots
\begin{pmatrix}
[a_{n-1}]_{q}&q^{a_{n-1}}\\[6pt]
1&0
\end{pmatrix}
\begin{pmatrix}
[a_{n}]_{q^{-1}}&q^{-a_{n}}\\[6pt]
1&0
\end{pmatrix}\\[20pt]
&=&
\left(
\begin{array}{cc}
qK_{n}(a_1,\ldots,a_n)_q&\widetilde{K}_{n-1}(a_1,\ldots,a_{n-1})_q\\[10pt]
qK_{n-1}(a_2,\ldots,a_n)_q&\widetilde{K}_{n-2}(a_2,\ldots,a_{n-1})_q
\end{array}
\right)
\end{array}
\end{equation}
where the notation $\widetilde{K}$ stands for the mirror polynomial, i.e. the one with reversed sequence of coefficients,
and
\begin{equation}
\label{qNegMat}
\begin{array}{lcl}
M(c_{1},\ldots, c_{k})_{q}&:=&
\begin{pmatrix}
[c_{1}]_{q}&-q^{c_{1}-1}\\[6pt]
1&0
\end{pmatrix}
\begin{pmatrix}
[c_{2}]_{q}&-q^{c_{2}-1}\\[6pt]
1&0
\end{pmatrix}
\cdots
\begin{pmatrix}
[c_{k}]_{q}&-q^{c_{k}-1}\\[6pt]
1&0
\end{pmatrix}\\[20pt]
&=&
\begin{pmatrix}
E_{k}(c_{1},\ldots, c_{k})_q&-q^{c_{k}-1}E_{k-1}(c_{1},\ldots, c_{k-1})_q\\[10pt]
E_{k-1}(c_{2},\ldots, c_{k})_q&-q^{c_{k}-1}E_{k-2}(c_{2},\ldots, c_{k-1})_q
\end{pmatrix}
\end{array}.
\end{equation}

\subsection{$q$-rationals}
Every rational $\frac{r}{s}>1$ has canonical continued fraction expansions of the form $[a_{1}, a_{2} \ldots, a_{2m}]$  and 
$ \llbracket{}c_1,c_2,\ldots, c_{k}\rrbracket{}$ with integer coefficients $a_{i}\geq 1$ and $c_{i}\geq 2$.

\begin{thm}[\cite{MGOfmsigma}]\label{genfrac}
If $\frac{r}{s}=[a_{1}, \ldots, a_{2m}]_{}=\llbracket{}c_1,c_2,\ldots, c_{k}\rrbracket{}_{}$ are the canonical expansions of the rational $\frac{r}{s}>1$ then
$$\left[\frac{r}{s}\right]_{q}=[a_{1}, \ldots, a_{2m}]_{q}=\llbracket{}c_1,c_2,\ldots, c_{k}\rrbracket{}_{q}.$$
\end{thm}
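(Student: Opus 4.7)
The plan is to establish both equalities by induction on the Farey tessellation construction of $\frac{r}{s}$, using the matrix formulas \eqref{qRegMat} and \eqref{qNegMat} together with the mediant rule \eqref{locr}. The point is that the Farey $q$-rational satisfies a natural recursion which, once encoded as a matrix product, collapses precisely into $M^+(a_1,\ldots,a_{2m})_q$ and $M(c_1,\ldots,c_k)_q$.

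First I would set up the correspondence between canonical continued fraction expansions and Stern--Brocot paths. Every rational $\frac{r}{s} > 1$ is obtained by a finite sequence of mediants starting from the pair $(\frac{1}{1},\frac{1}{0})$, and this sequence is recorded as a word in $\{R,L\}$. The regular expansion $[a_1,\ldots,a_{2m}]$ with $a_i \ge 1$ encodes the counts of consecutive same-direction moves (with a conventional adjustment at the ends to force even length), while the Hirzebruch--Jung expansion $\llbracket c_1,\ldots,c_k\rrbracket$ with $c_j \ge 2$ encodes the same path via the classical conversion between regular and negative continued fractions.

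Second I would verify the inductive step. If $\frac{r''}{s''}$ is the mediant of a Farey pair $\frac{r}{s} < \frac{r'}{s'}$ and $\left[\frac{r}{s}\right]_q = \frac{\Rc}{\Sc}$, $\left[\frac{r'}{s'}\right]_q = \frac{\Rc'}{\Sc'}$, then by \eqref{locr},
\[ \left[\tfrac{r''}{s''}\right]_q \;=\; \frac{\Rc + q^d \Rc'}{\Sc + q^d \Sc'}, \]
which amounts to right-multiplication of the matrix $\begin{pmatrix} \Rc & \Rc' \\ \Sc & \Sc' \end{pmatrix}$ by a unipotent elementary factor. The key calculation is that a batch of $a_i$ consecutive same-direction mediants multiplies this Farey matrix by a block whose entries are exactly the $q$-integer $[a_i]_q$ (or $[a_i]_{q^{-1}}$), together with a $q^{\pm a_i}$ term, thereby reproducing a single factor of \eqref{qRegMat}. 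This identification follows from telescoping the geometric series of accumulated exponents $d$ along the batch. Iterating across all batches, the full product recovers $M^+(a_1,\ldots,a_{2m})_q$, and the ratio of entries in the first column gives $[a_1,\ldots,a_{2m}]_q$ via \eqref{qregfrac}, which therefore equals $\left[\frac{r}{s}\right]_q$. The same strategy with the matrices of \eqref{qNegMat} yields the identity $\left[\frac{r}{s}\right]_q = \llbracket c_1,\ldots,c_k\rrbracket_q$; one can also read off this second equality by directly comparing the two CF expansions through the classical rewriting that sends each $[a_{2i-1},a_{2i}]$-block to a Hirzebruch--Jung block of length $a_{2i-1}-1$ or similar.

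The main obstacle will be the bookkeeping of exponents. One must verify carefully that the weights $q^d$ collected along a same-direction batch aggregate into exactly $[a_i]_q$ (not some other $q$-polynomial), that the alternation $q \leftrightarrow q^{-1}$ in \eqref{qRegMat} corresponds to the alternation between right and left batches along the Stern--Brocot path, and that the scalar prefactor $q^{\sum_i a_{2i}}$ in \eqref{qRegMat} emerges naturally from the accumulated exponents. This is essentially a compatibility check between the local Farey rule \eqref{locr} and the elementary matrix factorization, after which the two continued fraction identities follow simultaneously.
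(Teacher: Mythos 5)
The paper does not actually prove this statement: Theorem~\ref{genfrac} is imported from \cite{MGOfmsigma} with explicit attribution, and the authors only remark that the coincidence of the two $q$-continued fractions ``was first proved in \cite{MGOfmsigma}''. So there is no internal proof to compare yours against; what you have written is essentially a reconstruction of the argument of the cited source, with the logical direction reversed (you take the Farey-tessellation definition as primary and derive the continued-fraction/matrix formulas, whereas \cite{MGOfmsigma} defines $q$-rationals by the continued fractions and derives the weighted Farey picture). Your key computation does check out: under the local rule \eqref{locr}, a batch of $a$ consecutive mediants toward the right vertex, starting from a top edge of weight $q^{d-1}$, accumulates exponents $d, d+1, \dots, d+a-1$ and multiplies the Farey matrix $\bigl(\begin{smallmatrix}\Rc & \Rc' \\ \Sc & \Sc'\end{smallmatrix}\bigr)$ on the right by $\bigl(\begin{smallmatrix}1 & 0 \\ q^{d}[a]_q & 1\end{smallmatrix}\bigr)$, while a batch of $a$ mediants toward the left vertex yields $\bigl(\begin{smallmatrix}1 & [a]_q \\ 0 & q^{d+a-1}\end{smallmatrix}\bigr)$. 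One asymmetry you should track explicitly: in a leftward batch the exponent resets to $1$ after the first step (the new top edge is the weight-one left edge), whereas in a rightward batch it increments; the two batches therefore do not telescope in the same way, even though both produce a clean $q$-integer. These elementary factors are the $q$-deformed $R^a$, $L^a$ generators of \cite{MGOfmsigma} up to normalization, and assembling them into $M^+(a_1,\dots,a_{2m})_q$ and reading off the first column via \eqref{qregfrac} gives the first equality; the second follows either from Hirzebruch's conversion or from the relation $M^+_{2m}(a_1,\dots,a_{2m})_q = M_k(c_1,\dots,c_k)_q R_q$ already used in \S\ref{pf2}. The remaining issues you flag (the Stern--Brocot endpoint convention, the prefactor $q^{\sum_i a_{2i}}$, the $q \leftrightarrow q^{-1}$ alternation) are genuine bookkeeping but contain no obstruction, so the plan is viable as stated.
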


The fact that the two $q$-continued fractions \eqref{qregfrac} and \eqref{qnegfrac} are the same whenever they coincide at $q=1$ is not obvious at first sight and was first proved in \cite{MGOfmsigma}. It turns out that this result can be extended to fractions with arbitrary integer coefficients, not only positive, see \cite[Thm 7]{LMGadv}. 

\subsection{Farey tessellation revisited}\label{Trs}
Let  $\frac{r}{s}$ be a rational greater than 1.
The positive integer coefficients appearing in the  canonical expansions $\frac{r}{s}=[a_{1}, \ldots, a_{2m}]_{}=\llbracket{}c_1,c_2,\ldots, c_{k}\rrbracket{}_{}$ have combinatorial interpretations in the Farey tessellation coming from \cite{Ser} and \cite{CoCo}. We refer to \cite{MGOfarey1} for a more detailed overview on the subject.

We will illustrate the statement with the running example of $\frac{7}{5}=[1,2,1,1]=\llbracket 2,2,3\rrbracket$.
\begin{equation}\label{T7/5far}
\begin{matrix}
\begin{tikzpicture}
[scale=0.25]
	\draw (17,0) node[below] {$\frac{1}{0}$};
	\draw [fill=pink!50] (17,0) arc (0:180:17);
	\draw [fill=blue!30] (17,0) arc (0:180:16);
	\draw [fill=white] (-15,0) arc (0:180:1);
	\draw (-15,0) node[below] {$\frac{1}{1}$};
	\draw (-17,0) node[below] {$\frac{0}{1}$};
	
	\draw (1,0) arc (0:180:8);
	\draw (1,0) node[below] {$\frac{2}{1}$};
	\draw  [fill=white] (17,0) arc (0:180:8);
	
	\draw  [fill=pink!50] (-7,0) arc (0:180:4);
	\draw (-7,0) node[below] {$\frac{3}{2}$};
	\draw  [fill=white] (1,0) arc (0:180:4);
	
	\draw  [fill=white]  (-11,0) arc (0:180:2);
	\draw (-11,0) node[below] {$\frac{4}{3}$};
	\draw [fill=blue!30]  (-7,0) arc (0:180:2);

	\draw  [fill=white] (-9,0) arc (0:180:1);
	\draw (-9,0) node[below] {$\frac{7}{5}$};
	\draw  [fill=white] (-7,0) arc (0:180:1);
	
	\draw [red] (-9,0)--(-9,17);		
\end{tikzpicture}
\end{matrix}
\end{equation}

One draws a vertical line passing through $\frac{r}{s}$. This line crosses finitely many triangles in the Farey tessellation. Keeping these triangles and removing the other ones one obtains a finite tessellation denoted by $\T_{\frac{r}{s}}$.

\textbf{Interpretation of $a_i$.}
From top to bottom the vertical line starts crossing $a_1$ adjacent triangles with the base at the left (in pink on the picture), then $a_2$ adjacent triangles with the base at the right (in blue on the picture), then $a_3$ triangles with the base at the left, and so on. There is an ambiguity for the last triangle (the one with median vertex $\frac{r}{s}$). We attach the last triangle in such a way that we have an even number of coefficients $a_i$.

In the example of $\frac{r}{s}=\frac75$, one counts $(a_1,a_2,a_3,a_4)=(1,2,1,1)$, which coincide with the coefficients in the regular continued fraction expansion $\frac75= [1,2,1,1]$.

\textbf{Interpretation of $c_i$.} These coefficients count the numbers of triangles in $\T_{\frac{r}{s}}$ incident to each vertex located at the right of $\frac{r}{s}$ and enumerated in decreasing order. In the example of $\T_{\frac{7}{5}}$ one counts $c_1=2$ triangles incident to vertex $\frac10$, $c_2=2$ triangles incident to $\frac21$ and $c_3=3$ triangles incident to $\frac32$, which coincide with the coefficients in the negative continued fraction expansion $\frac75=\llbracket 2,2,3\rrbracket$.

We will now simplify the picture and draw triangulations of convex (non strictly convex) polygons instead of triangulations in the Farey tessellation. 
For example, the Farey triangulation $\T_{\frac{7}{5}}$ in \eqref{T7/5far} will be depicted as the following triangulated convex heptagon:

\begin{equation}\label{T7/5euc}
\begin{matrix}
\begin{tikzpicture}[scale=0.5, baseline=(current  bounding  box.center) ]
\draw (2,0) node[below] {$\frac{0}{1}$};
\draw (5,0) node[below] {$\frac{1}{1}$};
\draw (8,0) node[below] {$\frac{4}{3}$};
\draw [fill=pink!50] (2,0)--(5, 0)--(4,4)--(2,0);
\draw [fill=blue!30] (4,4)--(8,4)--(5, 0);
\draw (6,4)--(5,0); 
\draw  [fill=pink!50]  (8,4)--(8,0)--(5,0);
\draw [fill=blue!30] (8,0)--(10,4)--(8,4);
\draw (10,4) node[above] {$\frac{7}{5}$};
\draw (4,4) node[above] {$\frac10$};
\draw (6,4) node[above] {$\frac21$};
\draw (8,4) node[above] {$\frac32$};
\end{tikzpicture}
\end{matrix}
\end{equation}
The edges in the triangulated polygon will inherit the orientation of the edges from the Farey tessellation defined in \S\ref{pat}, see also the next paragraph for nore details.

\subsection{Triangulated polygons}
Consider the triangulation $\T$ of a convex (non strictly convex) $n$-gon of the following form, sometimes called \textit{fan triangulation}:
\begin{equation}\label{trigen}
\begin{matrix}
\begin{tikzpicture}[scale=0.7]
\draw  [fill=pink!50]  (0,0)--(2,4)--(6,0);
\draw [fill=blue!30] (2,4)--(8,4)--(6,0);
\draw  [fill=pink!50]  (8,4)--(7.5,0)--(6,0);
\draw[white] (8,4)--(7.5,0);
\draw [fill=blue!30] (12,0)--(14,4)--(12.5,4);
\draw  [fill=pink!50]  (12,0)--(18,0)--(14,4);
\draw [fill=blue!30] (18,0)--(20,4)--(14,4);
\draw (0,0)--(8,0);
\draw (0,0)--(2,4);
\draw (2,4)--(8,4);
\draw[dashed] (8,4)--(12,4);
\draw[dashed] (8,0)--(12,0);
\draw (12,4)--(20,4);
\draw (18,0)--(20,4);
\draw (12,0)--(18,0);
\draw (2,4)--(2,0);
\draw (2,4)--(4,0);
\draw (2,4)--(6,0);
\draw (4,4)--(6,0);
\draw (6,4)--(6,0);
\draw (8,4)--(6,0);
\draw[dashed] (8.5,2)--(11.6,2);
\draw (14,4)--(12,0);
\draw (14,4)--(14,0);
\draw (14,4)--(16,0);
\draw (14,4)--(18,0);
\draw (16,4)--(18,0);
\draw (18,4)--(18,0);
\draw [<->] (2,4.7)-- (8,4.7);
\draw (5,4.7) node[above] {$a_2$};
\draw [->] (11.5,4.7)-- (13.9,4.7);
\draw (12.7,4.7) node[above] {$a_{2m-2}$};
\draw [<->] (14.1,4.7)-- (20,4.7);
\draw (17,4.7) node[above] {$a_{2m}$};
\draw [<->] (0,-0.7)-- (5.9,-0.7);
\draw (3,-0.7) node[below] {$a_1$};
\draw [<-] (6.1,-0.7)-- (8.5,-0.7);
\draw (7.3,-0.7) node[below] {$a_3$};
\draw [<->] (12,-0.7)-- (18,-0.7);
\draw (15,-0.7) node[below] {$a_{2m-1}$};
\draw (0,0) node[below] { ${}_0$};
\draw (2,0) node[below] { ${}_{n-1}$};
\draw (4,0) node[below] { ${}_{n-2}$};
\draw (18,0) node[below] { ${}_{k+2}$};
\draw (2,4) node[above] { ${}_1$};
\draw (4,4) node[above] { ${}_2$};
\draw (18,4) node[above] { ${}_k$};
\draw (20,4) node[above] { ${}_{k+1}$};
\draw [<-,red,thick] (1,0)-- (1.01,0);
\draw [<-,red,thick] (3,0)-- (3.01,0);
\draw [<-,red,thick] (5,0)-- (5.01,0);
\draw [<-,red,thick] (6.8,0)-- (6.81,0);
\draw [<-,red,thick] (13,0)-- (13.01,0);
\draw [<-,red,thick] (15,0)-- (15.01,0);
\draw [<-,red,thick] (17,0)-- (17.01,0);
\draw [<-,red,thick] (3,4)-- (3.01,4);
\draw [<-,red,thick] (5,4)-- (5.01,4);
\draw [<-,red,thick] (7,4)-- (7.01,4);
\draw [<-,red,thick] (13.1,4)-- (13.11,4);
\draw [<-,red,thick] (15,4)-- (15.01,4);
\draw [<-,red,thick] (17,4)-- (17.01,4);
\draw [<-,red,thick] (19,4)-- (19.01,4);
\draw [->,red,thick] (2,2)-- (2,2.01);
\draw [->,red,thick] (2.995,2)-- (2.99,2.01);
\draw [->,red,thick] (4.003,2)-- (3.995,2.01);
\draw [<-,red,thick] (5.003,2)-- (4.995,2.01);
\draw [<-,red,thick] (6,2)-- (6,2.01);
\draw [<-,red,thick] (7,2)-- (7.005,2.01);
\draw [<-,red,thick] (13,2)-- (13.005,2.01);
\draw [->,red,thick] (14,2)-- (14,2.01);
\draw [->,red,thick] (14.995,2)-- (14.99,2.01);
\draw [->,red,thick] (16.003,2)-- (15.995,2.01);
\draw [<-,red,thick] (18,2)-- (18,2.01);
\draw [<-,red,thick] (17.003,2)-- (16.995,2.01);
\draw [<-,red,thick] (19,2)-- (19.005,2.01);
\end{tikzpicture}
\end{matrix}
\end{equation}

We associate two sequences of integers
\begin{enumerate}
\item
The integers $(a_1,a_2,\ldots,a_{2m})$, with $a_i\geq 1$, count the number of adjacent triangles according to their position ``base down'' or ``base up''
i.e. the triangulation consists of $a_1$ adjacent triangles base down at the left, followed by $a_2$ triangles base up and so on:
\item
The integers $\left(c_0, c_1,\ldots, c_{n-1}\right)$ 
count the number of triangles attached to each vertex, 
i.e., the integer~$c_i$ is the number of triangles incident to the vertex $i$. One has $c_0=c_{k+1}=1$, and $c_i\geq 2$ otherwise.
\end{enumerate}

In addition the triangulation $\T$ of \eqref{trigen} comes with an orientation of the edges: 
\begin{itemize}
\item an edge adjacent to a triangle base up at its right and a triangle
 base down at its left is oriented upward;
\item an edge adjacent to a triangle base up at its left and a triangle
 base down at its right is oriented downward;
 \item an edge adjacent to two triangles base up  is oriented downward;
 \item an edge adjacent to two triangles
 base down is oriented upward;
 \item a base edge is oriented leftward;
  \item the rightmost edge is oriented downward;
 \item the leftmost edge has no orientation.
\end{itemize}

Note that we also have the following immediate relations:
$$
n:=\#\{ \text{vertices} \} = \#\{ \text{triangles} \} +2\;\;=\big(\sum_{1\leq i \leq 2m} a_i \big)+2\;\;=\sum _{1\leq i \leq k} (c_i-1)+3.
$$

\begin{rem}
\begin{enumerate}
\item The sequence $\left(c_0, c_1,\ldots, c_{n-1}\right)$ is called the \textit{quiddity} of the triangulated polygon in reference of the theory of Conway-Coxeter friezes \cite{CoCo}. The quiddity sequence determines a unique triangulation of a polygon. In our situation exactly two coefficients are equal to 1, namely $c_0=c_{k+1}=1$. In this case each subsequence $(c_1,\ldots,c_k)$ or $(c_{k+2},\ldots,c_{n-1})$ uniquely determines the triangulation of the polygon.

Alternatively the sequence
$(a_1,a_2,\ldots,a_{2m})$ also determines uniquely the triangulation. 

\item One has the relationship
\begin{equation*}
(c_1,\ldots,c_k)=
\big(a_1+2,\underbrace{2,\ldots,2}_{a_2-1},\,
a_3+2,\underbrace{2,\ldots,2}_{a_4-1},\ldots,
a_{2m-1}+2,\underbrace{2,\ldots,2}_{a_{2m}-1}\big)
 \end{equation*}
which is the exact conversion formula of Hirzebruch for the continued fractions expansions.
In other words, the sequences of integers $(c_1,\ldots,c_k)$ and $(a_1,a_2,\ldots,a_{2m})$  provided by  the triangulation lead to the same rational computed by the two types of continued fractions :
$$\frac{r}{s}=[a_{1}, \ldots, a_{2m}]_{}=\llbracket{}c_1,c_2,\ldots, c_{k}\rrbracket{}_{}.$$

\item The Farey triangulation $\T_{\frac{r}{s}}$ described in \S\ref{Trs} correspond to the triangulated polygon \eqref{trigen} with parameters $(c_1,\ldots,c_k)$ and $(a_1,a_2,\ldots,a_{2m})$, where these sequences respectively encode the negative and regular continued fraction expansions; see \cite{MGOfarey1} for details.

\end{enumerate}

\end{rem}
\subsection{Combinatorial interpretations of the $q$-continuants}
We reformulate the result of Theorem \ref{thar} in terms of $q$-continuants. We use the model of triangulated polygons instead of the Farey tessellation.

Starting from a sequence $(a_1,a_2,\ldots,a_{2m})$ of positive integers, or from a sequence $(c_1,\ldots,c_k)$ of integers greater than 1, denote by $\T$ the corresponding fan triangulated $n$-gon \eqref{trigen}. We consider paths that follow the oriented edges of $\T$. If $\pi $ is a path from vertex $k_1$ to vertex $k_2$ we write $\pi: k_1\to k_2$. 

In the triangulation $\T$, one defines the \textit{top path}
$$\tau:k+1\to k \to \cdots \to 2\to1$$
 and the \textit{bottom path}
 $$\beta: k+1\to k+3\to \cdots \to n-1 \to 1.$$ 
 The \textit{area} and \textit{coarea} of a path $\pi$ in  $\T$ are defined as
 \begin{eqnarray}\label{ardef2}
\ar(\pi)&:=&\#\{\text{triangles enclosed between $\pi$ and top path $\tau$}\}\\
\label{coardef2}
\coar(\pi)&:=&\#\{\text{triangles enclosed between $\pi$ and the bottom path of  $\beta$ }\}
\end{eqnarray}

Note that the rightmost triangle $t_0-\{0,1,n-1\}$ is never taken into account in the count for the area and the coarea.
The area, resp. coarea, corresponds to the number of triangles with three oriented edges above the path, resp. under the path. 

\begin{ex}
Starting from the sequence $(a_1, \ldots, a_{2m})=(1,2,1,1)$ or from $(c_1,\ldots, c_k)=(2,2,3)$ one obtains the following triangulated heptagon.
The top path is colored in blue and the bottom path in orange. 
\begin{center}
\begin{tikzpicture}[scale=0.5, baseline=(current  bounding  box.center) ]
		\draw (2,0)--(5, 0)--(4,4)--(2,0);
		\draw (4,4)--(8,4)--(5, 0);
		\draw (6,4)--(5,0); 
		\draw  (8,4)--(8,0)--(5,0);
		\draw (8,0)--(10,4)--(8,4);
		\draw [orange, very thick] (4,4)--(5,0)--(8,0)--(10,4);
		\draw [blue, very thick] (10,4)--(4,4);
		\draw [<-,red,thick] (3.5,0)-- (3.51,0);
		\draw [<-,red,thick] (6.5,0)-- (6.51,0);
		\draw [<-,red,thick] (7,4)-- (7.01,4);
		\draw [<-,red,thick] (5,4)-- (5.01,4);
		\draw [<-,red,thick] (9,4)-- (9.01,4);
		\draw [<-,red,thick] (9,2)-- (9.01,2.02);
		\draw [<-,red,thick] (6.5,2)-- (6.51,2.02);
		\draw [<-,red,thick] (5.5,2)-- (5.51,2.03);
		\draw [<-,red,thick] (4.4,2.3)-- (4.41,2.27);
		\draw [<-,red,thick] (8,2.3)-- (8.01,2.19);
		\draw (2,0) node[below] {$0$};
\draw (5,0) node[below] {$6$};
\draw (8,0) node[below] {$5$};
\draw (10,4) node[above] {$4$};
\draw (4,4) node[above] {$1$};
\draw (6,4) node[above] {$2$};
\draw (8,4) node[above] {$3$};
	\end{tikzpicture}
	\end{center}
	Examples of paths and of coareas of paths are given in Figure \ref{ex5euc} and \ref{ex7euc}. 
	Note that all the displayed examples in the fan triangulation model  correspond to Example \ref{triang75} and to Figures~\ref{ex5} and \ref{ex7} in the model of Farey tesselation.
\end{ex}

	\begin{figure}
	\begin{center}
	\begin{tikzpicture}[scale=0.4, baseline=(current  bounding  box.center) ]
		\draw (5.5,0) node[below] {$q^0$};
		\draw (4.9,0) node[below] {$ $};
		\draw (2,0)--(5, 0)--(4,4)--(2,0);
		\draw (4,4)--(8,4)--(5, 0);
		\draw (6,4)--(5,0); 
		\draw  (8,4)--(8,0)--(5,0);
		\draw (8,0)--(10,4)--(8,4);
		\draw [orange, very thick] (4,4)--(5,0)--(8,0)--(10,4);
		\draw [green, very thick] (10,4)--(8,0)--(2,0);
		\draw [very thin] (10,4)--(8,0)--(2,0);
		\draw [<-,red,thick] (3.5,0)-- (3.51,0);
		\draw [<-,red,thick] (6.5,0)-- (6.51,0);
		\draw [<-,red,thick] (7,4)-- (7.01,4);
		\draw [<-,red,thick] (5,4)-- (5.01,4);
		\draw [<-,red,thick] (9,4)-- (9.01,4);
		\draw [<-,red,thick] (9,2)-- (9.01,2.02);
		\draw [<-,red,thick] (6.5,2)-- (6.51,2.02);
		\draw [<-,red,thick] (5.5,2)-- (5.51,2.03);
		\draw [<-,red,thick] (4.4,2.3)-- (4.41,2.27);
		\draw [<-,red,thick] (8,2.3)-- (8.01,2.19);
	\end{tikzpicture}
	\begin{tikzpicture}[scale=0.4, baseline=(current  bounding  box.center) ]
		\draw (5.5,0) node[below] {$q^1$};
		\draw (4.9,0) node[below] {$ $};
		\draw (2,0)--(5, 0)--(4,4)--(2,0);
		\draw (4,4)--(8,4)--(5, 0);
		\draw (6,4)--(5,0); 
		\draw  (8,4)--(8,0)--(5,0);
		\draw (8,0)--(10,4)--(8,4);
		\draw [fill=gray!30] (5, 0)--(8,4)--(8,0)--(5,0);
		\draw [orange, very thick] (4,4)--(5,0)--(8,0)--(10,4);
		\draw [green, very thick] (10,4)--(8,0)--(8,4)--(5,0)--(2,0);
		\draw  [very thin] (10,4)--(8,0)--(8,4)--(5,0)--(2,0);
		\draw [<-,red,thick] (3.5,0)-- (3.51,0);
		\draw [<-,red,thick] (6.5,0)-- (6.51,0);
		\draw [<-,red,thick] (7,4)-- (7.01,4);
		\draw [<-,red,thick] (5,4)-- (5.01,4);
		\draw [<-,red,thick] (9,4)-- (9.01,4);
		\draw [<-,red,thick] (9,2)-- (9.01,2.02);
		\draw [<-,red,thick] (6.5,2)-- (6.51,2.02);
		\draw [<-,red,thick] (5.5,2)-- (5.51,2.03);
		\draw [<-,red,thick] (4.4,2.3)-- (4.41,2.27);
		\draw [<-,red,thick] (8,2.3)-- (8.01,2.19);
	\end{tikzpicture}	
	\begin{tikzpicture}[scale=0.4, baseline=(current  bounding  box.center) ]
		\draw (5.5,0) node[below] {$q^2$};
		\draw (4.9,0) node[below] {$ $};
		\draw [fill=gray!30] (5, 0)--(6,4)--(8,4)--(8,0)--(5,0);
		\draw (2,0)--(5, 0)--(4,4)--(2,0);
		\draw (4,4)--(8,4)--(5, 0);
		\draw (6,4)--(5,0); 
		\draw  (8,4)--(8,0)--(5,0);
		\draw (8,0)--(10,4)--(8,4);
		\draw [orange, very thick] (4,4)--(5,0)--(8,0)--(10,4);
		\draw [green, very thick] (10,4)--(8,0)--(8,4)--(6,4)--(5,0)--(2,0);
		\draw [very thin] (10,4)--(8,0)--(8,4)--(6,4)--(5,0)--(2,0);
		\draw [<-,red,thick] (3.5,0)-- (3.51,0);
		\draw [<-,red,thick] (6.5,0)-- (6.51,0);
		\draw [<-,red,thick] (7,4)-- (7.01,4);
		\draw [<-,red,thick] (5,4)-- (5.01,4);
		\draw [<-,red,thick] (9,4)-- (9.01,4);
		\draw [<-,red,thick] (9,2)-- (9.01,2.02);
		\draw [<-,red,thick] (6.5,2)-- (6.51,2.02);
		\draw [<-,red,thick] (5.5,2)-- (5.51,2.03);
		\draw [<-,red,thick] (4.4,2.3)-- (4.41,2.27);
		\draw [<-,red,thick] (8,2.3)-- (8.01,2.19);
	\end{tikzpicture}
	
	\begin{tikzpicture}[scale=0.4, baseline=(current  bounding  box.center) ]
		\draw (5.5,0) node[below] {$q^2$};
		\draw (4.9,0) node[below] {$ $};
		\draw [fill=gray!30] (5, 0)--(8,0)--(10,4)--(8,4)--(5,0);
		\draw (2,0)--(5, 0)--(4,4)--(2,0);
		\draw (4,4)--(8,4)--(5, 0);
		\draw (6,4)--(5,0); 
		\draw  (8,4)--(8,0)--(5,0);
		\draw (8,0)--(10,4)--(8,4);
		\draw [orange, very thick] (4,4)--(5,0)--(8,0)--(10,4);
		\draw [green, very thick] (10,4)--(8,4)--(5,0)--(2,0);
		\draw [very thin] (10,4)--(8,4)--(5,0)--(2,0);
		\draw [<-,red,thick] (3.5,0)-- (3.51,0);
		\draw [<-,red,thick] (6.5,0)-- (6.51,0);
		\draw [<-,red,thick] (7,4)-- (7.01,4);
		\draw [<-,red,thick] (5,4)-- (5.01,4);
		\draw [<-,red,thick] (9,4)-- (9.01,4);
		\draw [<-,red,thick] (9,2)-- (9.01,2.02);
		\draw [<-,red,thick] (6.5,2)-- (6.51,2.02);
		\draw [<-,red,thick] (5.5,2)-- (5.51,2.03);
		\draw [<-,red,thick] (4.4,2.3)-- (4.41,2.27);
		\draw [<-,red,thick] (8,2.3)-- (8.01,2.19);
	\end{tikzpicture}	
	\begin{tikzpicture}[scale=0.4, baseline=(current  bounding  box.center) ]
		\draw (5.5,0) node[below] {$q^3$};
		\draw (4.9,0) node[below] {$ $};
		\draw [fill=gray!30] (5, 0)--(6,4)--(10,4)--(8,0)--(5,0);
		\draw (2,0)--(5, 0)--(4,4)--(2,0);
		\draw (4,4)--(8,4)--(5, 0);
		\draw (6,4)--(5,0); 
		\draw  (8,4)--(8,0)--(5,0);
		\draw (8,0)--(10,4)--(8,4);
		\draw [orange, very thick] (4,4)--(5,0)--(8,0)--(10,4);
		\draw [green, very thick] (10,4)--(6,4)--(5,0)--(2,0);
		\draw [orange, very thick] (4,4)--(5,0)--(8,0)--(10,4);
		\draw [very thin] (10,4)--(6,4)--(5,0)--(2,0);
		\draw [<-,red,thick] (3.5,0)-- (3.51,0);
		\draw [<-,red,thick] (6.5,0)-- (6.51,0);
		\draw [<-,red,thick] (7,4)-- (7.01,4);
		\draw [<-,red,thick] (5,4)-- (5.01,4);
		\draw [<-,red,thick] (9,4)-- (9.01,4);
		\draw [<-,red,thick] (9,2)-- (9.01,2.02);
		\draw [<-,red,thick] (6.5,2)-- (6.51,2.02);
		\draw [<-,red,thick] (5.5,2)-- (5.51,2.03);
		\draw [<-,red,thick] (4.4,2.3)-- (4.41,2.27);
		\draw [<-,red,thick] (8,2.3)-- (8.01,2.19);
	\end{tikzpicture}
	\caption{The 5 paths starting at vertex $4$ and ending at vertex 0. The triangles shaded in gray correspond to the triangles contributing to the coarea of the path. The coarea-generating polynomial $1+q+2q^2+q^3$ corresponds to the denominator of~$\left[\frac75\right]_q$. }
	\label{ex5euc}
	\end{center}
	\end{figure}
	
	\begin{figure}[h!]
	\begin{center}
		\begin{tikzpicture}[scale=0.4]
		\draw (5.5,0) node[below] {$q^0$};
		\draw (2,0)--(5, 0)--(4,4)--(2,0);
		\draw (4,4)--(8,4)--(5, 0);
		\draw (6,4)--(5,0); 
		\draw  (8,4)--(8,0)--(5,0);
		\draw (8,0)--(10,4)--(8,4);
		\draw [orange, very thick] (4,4)--(5,0)--(8,0)--(10,4);
		\draw [green, very thick] (4,4)--(5,0)--(8,0)--(10,4);
		\draw [very thin]  (4,4)--(5,0)--(8,0)--(10,4);
		\draw [<-,red,thick] (3.5,0)-- (3.51,0);
		\draw [<-,red,thick] (6.5,0)-- (6.51,0);
		\draw [<-,red,thick] (7,4)-- (7.01,4);
		\draw [<-,red,thick] (5,4)-- (5.01,4);
		\draw [<-,red,thick] (9,4)-- (9.01,4);
		\draw [<-,red,thick] (9,2)-- (9.01,2.02);
		\draw [<-,red,thick] (6.5,2)-- (6.51,2.02);
		\draw [<-,red,thick] (5.5,2)-- (5.51,2.03);
		\draw [<-,red,thick] (4.4,2.3)-- (4.41,2.27);
		\draw [<-,red,thick] (8,2.3)-- (8.01,2.19);
	\end{tikzpicture}
		\begin{tikzpicture}[scale=0.4]
		\draw (5.5,0) node[below] {$q^1$};
		\draw [fill=gray!30] (5, 0)--(8,4)--(8,0)--(5,0);
		\draw (2,0)--(5, 0)--(4,4)--(2,0);
		\draw (4,4)--(8,4)--(5, 0);
		\draw (6,4)--(5,0); 
		\draw  (8,4)--(8,0)--(5,0);
		\draw (8,0)--(10,4)--(8,4);
		\draw [orange, very thick] (4,4)--(5,0)--(8,0)--(10,4);
		\draw [green, very thick] (10,4)--(8,0)--(8,4)--(5,0)--(4,4);
		\draw [green, very thick] (10,4)--(8,0)--(8,4)--(5,0)--(4,4);
		\draw [<-,red,thick] (3.5,0)-- (3.51,0);
		\draw [<-,red,thick] (6.5,0)-- (6.51,0);
		\draw [<-,red,thick] (7,4)-- (7.01,4);
		\draw [<-,red,thick] (5,4)-- (5.01,4);
		\draw [<-,red,thick] (9,4)-- (9.01,4);
		\draw [<-,red,thick] (9,2)-- (9.01,2.02);
		\draw [<-,red,thick] (6.5,2)-- (6.51,2.02);
		\draw [<-,red,thick] (5.5,2)-- (5.51,2.03);
		\draw [<-,red,thick] (4.4,2.3)-- (4.41,2.27);
		\draw [<-,red,thick] (8,2.3)-- (8.01,2.19);
	\end{tikzpicture}
		\begin{tikzpicture}[scale=0.4]
		\draw (5.5,0) node[below] {$q^2$};
		\draw [fill=gray!30] (5, 0)--(8,0)--(10,4)--(8,4)--(5,0);
		\draw (2,0)--(5, 0)--(4,4)--(2,0);
		\draw (4,4)--(8,4)--(5, 0);
		\draw (6,4)--(5,0); 
		\draw  (8,4)--(8,0)--(5,0);
		\draw (8,0)--(10,4)--(8,4);
		\draw [orange, very thick] (4,4)--(5,0)--(8,0)--(10,4);
		\draw [green, very thick] (10,4)--(8,4)--(5,0)--(4,4);
		\draw [very thin]  (10,4)--(8,4)--(5,0)--(4,4);
		\draw [<-,red,thick] (3.5,0)-- (3.51,0);
		\draw [<-,red,thick] (6.5,0)-- (6.51,0);
		\draw [<-,red,thick] (7,4)-- (7.01,4);
		\draw [<-,red,thick] (5,4)-- (5.01,4);
		\draw [<-,red,thick] (9,4)-- (9.01,4);
		\draw [<-,red,thick] (9,2)-- (9.01,2.02);
		\draw [<-,red,thick] (6.5,2)-- (6.51,2.02);
		\draw [<-,red,thick] (5.5,2)-- (5.51,2.03);
		\draw [<-,red,thick] (4.4,2.3)-- (4.41,2.27);
		\draw [<-,red,thick] (8,2.3)-- (8.01,2.19);
	\end{tikzpicture}
		\begin{tikzpicture}[scale=0.4]
		\draw (5.5,0) node[below] {$q^2$};
		\draw [fill=gray!30] (5, 0)--(8,0)--(8,4)--(6,4)--(5,0);
		\draw (2,0)--(5, 0)--(4,4)--(2,0);
		\draw (4,4)--(8,4)--(5, 0);
		\draw (6,4)--(5,0); 
		\draw  (8,4)--(8,0)--(5,0);
		\draw (8,0)--(10,4)--(8,4);
		\draw [orange, very thick] (4,4)--(5,0)--(8,0)--(10,4);
		\draw [green, very thick] (10,4)--(8,0)--(8,4)--(6,4)--(5,0)--(4,4);
		\draw [very thin]  (10,4)--(8,0)--(8,4)--(6,4)--(5,0)--(4,4);
		\draw [<-,red,thick] (3.5,0)-- (3.51,0);
		\draw [<-,red,thick] (6.5,0)-- (6.51,0);
		\draw [<-,red,thick] (7,4)-- (7.01,4);
		\draw [<-,red,thick] (5,4)-- (5.01,4);
		\draw [<-,red,thick] (9,4)-- (9.01,4);
		\draw [<-,red,thick] (9,2)-- (9.01,2.02);
		\draw [<-,red,thick] (6.5,2)-- (6.51,2.02);
		\draw [<-,red,thick] (5.5,2)-- (5.51,2.03);
		\draw [<-,red,thick] (4.4,2.3)-- (4.41,2.27);
		\draw [<-,red,thick] (8,2.3)-- (8.01,2.19);
	\end{tikzpicture}

		\begin{tikzpicture}[scale=0.4, baseline=(current  bounding  box.center) ]
		\draw (5.5,0) node[below] {$q^3$};
		\draw [fill=gray!30] (5, 0)--(8,0)--(10,4)--(6,4)--(5,0);
		\draw (2,0)--(5, 0)--(4,4)--(2,0);
		\draw (4,4)--(8,4)--(5, 0);
		\draw (6,4)--(5,0); 
		\draw  (8,4)--(8,0)--(5,0);
		\draw (8,0)--(10,4)--(8,4);
		\draw [orange, very thick] (4,4)--(5,0)--(8,0)--(10,4);
		\draw [green, very thick] (10,4)--(6,4)--(5,0)--(4,4);
		\draw [green, very thick] (10,4)--(6,4)--(5,0)--(4,4);
		\draw [<-,red,thick] (3.5,0)-- (3.51,0);
		\draw [<-,red,thick] (6.5,0)-- (6.51,0);
		\draw [<-,red,thick] (7,4)-- (7.01,4);
		\draw [<-,red,thick] (5,4)-- (5.01,4);
		\draw [<-,red,thick] (9,4)-- (9.01,4);
		\draw [<-,red,thick] (9,2)-- (9.01,2.02);
		\draw [<-,red,thick] (6.5,2)-- (6.51,2.02);
		\draw [<-,red,thick] (5.5,2)-- (5.51,2.03);
		\draw [<-,red,thick] (4.4,2.3)-- (4.41,2.27);
		\draw [<-,red,thick] (8,2.3)-- (8.01,2.19);
	\end{tikzpicture}
		\begin{tikzpicture}[scale=0.4, baseline=(current  bounding  box.center) ]
		\draw (5.5,0) node[below] {$q^3$};
		\draw [fill=gray!30] (5, 0)--(8,0)--(8,4)--(4,4)--(5,0);
		\draw (2,0)--(5, 0)--(4,4)--(2,0);
		\draw (4,4)--(8,4)--(5, 0);
		\draw (6,4)--(5,0); 
		\draw  (8,4)--(8,0)--(5,0);
		\draw (8,0)--(10,4)--(8,4);
		\draw [orange, very thick] (4,4)--(5,0)--(8,0)--(10,4);
		\draw [green, very thick] (10,4)--(8,0)--(8,4)--(4,4);
		\draw [very thin]  (10,4)--(8,0)--(8,4)--(4,4);
		\draw [<-,red,thick] (3.5,0)-- (3.51,0);
		\draw [<-,red,thick] (6.5,0)-- (6.51,0);
		\draw [<-,red,thick] (7,4)-- (7.01,4);
		\draw [<-,red,thick] (5,4)-- (5.01,4);
		\draw [<-,red,thick] (9,4)-- (9.01,4);
		\draw [<-,red,thick] (9,2)-- (9.01,2.02);
		\draw [<-,red,thick] (6.5,2)-- (6.51,2.02);
		\draw [<-,red,thick] (5.5,2)-- (5.51,2.03);
		\draw [<-,red,thick] (4.4,2.3)-- (4.41,2.27);
		\draw [<-,red,thick] (8,2.3)-- (8.01,2.19);
	\end{tikzpicture}
		\begin{tikzpicture}[scale=0.4, baseline=(current  bounding  box.center) ]
		\draw (5.5,0) node[below] {$q^4$};
		\draw [fill=gray!30] (5, 0)--(8,0)--(10,4)--(4,4)--(5,0);
		\draw (2,0)--(5, 0)--(4,4)--(2,0);
		\draw (4,4)--(8,4)--(5, 0);
		\draw (6,4)--(5,0); 
		\draw  (8,4)--(8,0)--(5,0);
		\draw (8,0)--(10,4)--(8,4);
		\draw [orange, very thick] (4,4)--(5,0)--(8,0)--(10,4);
		\draw [green, very thick] (10,4)--(4,4);
		\draw [very thin]  (10,4)--(4,4);
		\draw [<-,red,thick] (3.5,0)-- (3.51,0);
		\draw [<-,red,thick] (6.5,0)-- (6.51,0);
		\draw [<-,red,thick] (7,4)-- (7.01,4);
		\draw [<-,red,thick] (5,4)-- (5.01,4);
		\draw [<-,red,thick] (9,4)-- (9.01,4);
		\draw [<-,red,thick] (9,2)-- (9.01,2.02);
		\draw [<-,red,thick] (6.5,2)-- (6.51,2.02);
		\draw [<-,red,thick] (5.5,2)-- (5.51,2.03);
		\draw [<-,red,thick] (4.4,2.3)-- (4.41,2.27);
		\draw [<-,red,thick] (8,2.3)-- (8.01,2.19);
	\end{tikzpicture}
	\caption{The 7 paths starting at vertex $4$ and ending at vertex 1. The triangles shaded in gray correspond to the triangles contributing to the coarea of the path. The coarea-generating polynomial $1+q+2q^2+2q^3+q^4$ corresponds to the numerator of~$\left[\frac75\right]_q$. }
	\label{ex7euc}
\end{center}
	\end{figure}

\begin{prop}\label{intcont}
With the above notation, one has
\begin{equation*}
\begin{array}{lclcl}
K^{}_{2m}(a_1,\ldots,a_{2m})_{q}&=&E_k(c_1,\ldots,c_k)_q&=&\displaystyle\sum_{\pi\,:\,k+1\to1}q^{\coar(\pi)}\;,\\[20pt]
K^{}_{2m-1}(a_2,\ldots,a_{2m})_{q}&=&E_{k-1}(c_2,\ldots,c_k)_q&=&\displaystyle\sum_{\pi\,:\,k+1\to0}q^{\coar(\pi)},
\end{array}
\end{equation*}
where the sums run over all the paths $\pi$ in $\T$ starting at vertex $k+1$ and ending at vertex 1 or vertex 0 respectively.
\end{prop}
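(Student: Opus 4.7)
The strategy is to translate Theorem \ref{thar} into the polygon model by pushing the combinatorial data of the Farey tessellation through the identification of $\T_{r/s}$ with the fan triangulated polygon $\T$ established in \S\ref{Trs}.

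Fix $r/s>1$ with canonical expansions $[a_1,\ldots,a_{2m}]=\llbracket c_1,\ldots,c_k\rrbracket$. First I would pin down the vertex dictionary between the two models: reading the outer boundary of $\T_{r/s}$ in the order indicated by \eqref{T7/5euc}, the vertex labels of the polygon \eqref{trigen} are identified as $0\leftrightarrow \tfrac{0}{1}$, $1\leftrightarrow \tfrac{1}{0}$, $k+1\leftrightarrow \tfrac{r}{s}$ and $n-1\leftrightarrow \tfrac{1}{1}$, with the remaining labels corresponding to the intermediate medians. A routine check shows that the local orientation rule from \S\ref{pat} matches the orientation rule described below \eqref{trigen}, so that oriented paths in $\T_{r/s}$ are in bijection with oriented paths in $\T$. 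Moreover, the Farey left-path from $\tfrac{r}{s}$ to $\tfrac{1}{0}$ is carried to the polygon bottom path $\beta:k+1\to\cdots\to n-1\to 1$.

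With this dictionary, Farey paths $\tfrac{r}{s}\to\tfrac{1}{0}$ correspond bijectively to polygon paths $k+1\to 1$ with matching coarea. For the denominator side, Farey paths $\tfrac{r}{s}\to\tfrac{1}{1}$ correspond to polygon paths $k+1\to n-1$; since $n-1\to 0$ is the unique oriented edge with endpoint $0$, these extend uniquely to polygon paths $k+1\to 0$, and the extension does not alter the coarea because the triangle $\{0,1,n-1\}$ is explicitly excluded from the count. Applying Theorem \ref{thar} therefore yields
\[
\Rc=\sum_{\pi\colon k+1\to 1}q^{\coar(\pi)},\qquad \Sc=\sum_{\pi\colon k+1\to 0}q^{\coar(\pi)}.
\]
To conclude, I would combine Theorem \ref{genfrac} with the continued fraction identities \eqref{qregfrac} and \eqref{qnegfrac} to identify $\Rc=K_{2m}(a_1,\ldots,a_{2m})_q=E_k(c_1,\ldots,c_k)_q$ and $\Sc=K_{2m-1}(a_2,\ldots,a_{2m})_q=E_{k-1}(c_2,\ldots,c_k)_q$, which is exactly the statement of the proposition.

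The main obstacle is the careful bookkeeping needed to confirm that the coarea is preserved under the Farey/polygon dictionary: one must verify that triangles enclosed between a path and the left-path in $\T_{r/s}$ match one-to-one with triangles enclosed between the corresponding path and the bottom path $\beta$ in $\T$, and that the convention of excluding $\{0,1,n-1\}$ handles the final-edge extension $n-1\to 0$ cleanly. Once this combinatorial identification is nailed down, the proposition is simply Theorem \ref{thar} rewritten in the polygon language, together with the already-established identification of numerators and denominators with $q$-continuants.
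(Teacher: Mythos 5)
Your proposal is correct and follows essentially the same route as the paper: the paper's proof likewise identifies the fan triangulation $\T$ as a redrawing of the Farey triangulation $\T_{\frac{r}{s}}$ and declares the proposition a reformulation of Theorem \ref{thar} combined with Theorem \ref{genfrac}. You simply spell out the vertex dictionary and the extension of paths from $n-1$ to $0$ more explicitly than the paper does, which is consistent with its conventions.
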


\begin{proof} Consider the corresponding rational number defined by
$$\frac{r}{s}:=[a_{1}, \ldots, a_{2m}]_{}=\llbracket{}c_1,c_2,\ldots, c_{k}\rrbracket{}_{}.$$
As explained in \S\ref{Trs} (see also \cite{MGOfarey1} for more details) the Farey triangulation $\T_{\frac{r}{s}}$ contains the sequence $(a_{1}, \ldots, a_{2m})$ in the distribution of triangles according to the position of the bases at the left or right of the vertical line drawn from $\frac{r}{s}$. The triangulation $\T$ with parameters $(a_{1}, \ldots, a_{2m})$ is a redrawing of  $\T_{\frac{r}{s}}$ using Euclidean triangles. The vertical line can be thought in $\T$ as the diagonal joining the vertices $0$ and $k+1$ and the position left or right of the base of a Farey triangle becomes position under or above this diagonal.
Since
$$\dfrac{\Rc}{\Sc}=\left[\frac{r}{s}\right]_{q}=\dfrac{K_{n}(a_1,\ldots,a_n)_q}{K_{n-1}(a_2,\ldots,a_{n})_q}
         =\dfrac{E_{k}(c_1,\ldots,c_k)_q}{E_{k-1}(c_2,\ldots,c_{k})_q}
$$
the proposition is a simple reformulation of Theorem \ref{thar}.
\end{proof}

\begin{rem}
When $q=1$ the proposition states that the numerator $r=K^{}_{2m}(a_1,\ldots,a_{2m})$ is the total number of paths from  vertex $k+1$ to vertex 1 and the denominator $s=K^{}_{2m-1}(a_2,\ldots,a_{2m})$ is the total number of paths from  vertex $k+1$ to vertex 0. This result can be found in \cite[Prop. 5.21]{PoP} in the language of lotuses. It is also equivalent to the results of \cite{Pro} and \cite{CaSc} in terms of paths or matchings in  snake graphs.
\end{rem}

\begin{rem}\label{intcont2} The Farey triangulation $\T_\frac{r}{s}$ contains the two 
rationals defined by $\widetilde{\frac{r}{s}}:=[a_{1}, \ldots, a_{2m-1}]_{}$ and $\overline{\frac{r}{s}}:=\llbracket{}c_1,c_2,\ldots, c_{k-1}\rrbracket{}_{}$. These rationals are called \textit{convergents} of  $\frac{r}{s}=[a_{1}, \ldots, a_{2m}]_{}=\llbracket{}c_1,c_2,\ldots, c_{k}\rrbracket{}_{}$. The convergents appear in the second column of the matrices \eqref{MatEq}. In the fan triangulation $\T$ these convergents correspond to vertex $k+2$ and vertex $k$ respectively. The fan triangulations corresponding to the convergents are included in $\T$. They are obtained by removing all the triangles but one incident to the vertex $k$ or $k+2$.
Therefore changing the initial vertex in the paths we obtain similar formula for the convergents: 
\begin{equation*}
\begin{array}{lcllcl}
\widetilde{K}^{}_{2m-1}(a_1,\ldots,a_{2m-1})_{q}&=&\displaystyle\sum_{\pi\,:\,k+2\to1}q^{\coar(\pi)}\;,&
\widetilde{K}^{}_{2m-2}(a_2,\ldots,a_{2m-1})_{q}&=&\displaystyle\sum_{\pi\,:\,k+2\to0}q^{\coar(\pi)},\\[20pt]
E_{k-1}(c_1,\ldots,c_{k-1})_q&=&\displaystyle\sum_{\pi\,:\,k\to1}q^{\coar(\pi)}\;,&
E_{k-2}(c_2,\ldots,c_{k-1})_q&=&\displaystyle\sum_{\pi\,:\,k\to1}q^{\coar(\pi)}\;,\\[10pt]
\end{array}
\end{equation*}
where the sums run over paths $\pi$ in $\T$.
\end{rem}

\section{$q$-rotundus and triangulations of annuli}\label{rotund}
In this section we define the quantum rotundi and give interpretations using triangulation of annuli.
\subsection{Definition}
The quantum rotundi are the $q$-analogues of \eqref{RotsEq} defined by
\begin{equation}
\label{qRotsEq}
\begin{array}{lclcl}
R^+ (a_1,\ldots,a_n)_q&:=&qK_n (a_1,\ldots,a_n)_q+
\widetilde{K}_{n-2} (a_2,\ldots,a_{n-1})_q&=&\Tr M^+ (a_1,\ldots,a_n)_q
\\[12pt]
R(c_1,\ldots,c_k)_q&:=&E_{k}(c_1,\ldots,c_k)_q-q^{c_{k}-1}E_{k-2}(c_2,\ldots,c_{k-1})_q &=&\Tr M (c_1,\ldots,c_k)_q\;,
\end{array}
\end{equation}
where $n$ is even and  $a_i$ and $c_i$ are positive integers. The $q$-rotundi are polynomials in $q$.
Traces of $q$-deformed matrices of $\SL(2,\Z)$ were studied in \cite{LMGadv} and in particular one has the following property.
\begin{thm}[\cite{LMGadv}]\label{palin}
The $q$-rotundi are palindromic polynomials in $q$ with positive integer coefficients.
\end{thm}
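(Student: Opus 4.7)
The plan is to split the statement into the two claims (positive coefficients and palindromicity) and handle each by a different technique.

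\emph{Positive integer coefficients.} For $R^+$ this is immediate from the defining formula
$R^+(a_1,\ldots,a_n)_q = q\,K_n(a_1,\ldots,a_n)_q + \widetilde{K}_{n-2}(a_2,\ldots,a_{n-1})_q$,
since the $q$-continuants $K_n$ have positive integer coefficients (they are numerators of $q$-rationals, see the list of properties in Section \ref{qFar}), and the mirror polynomial $\widetilde{K}_{n-2}$ inherits this. For $R(c_1,\ldots,c_k)_q = E_k - q^{c_k-1}E_{k-2}$ the sign makes positivity nonobvious from the formula, so I would defer it to the enumerative interpretation established later in Section \ref{rotund}, which will expose $R$ as a generating function $\sum_\gamma q^{\text{area}(\gamma)}$ over closed loops in a triangulated annulus—manifestly a polynomial with nonnegative integer coefficients.

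\emph{Palindromicity.} The strategy is to use a matrix identity converting inversion into $q\leftrightarrow q^{-1}$. A direct computation using $[c]_{q^{-1}}=q^{1-c}[c]_q$ gives
$$J\, M(c)_q^{-1}\, J \;=\; M(c)_{q^{-1}}, \qquad J=\begin{pmatrix} 0 & 1 \\ 1 & 0 \end{pmatrix}.$$
Because $J^2 = I$, taking products yields
$$J\, M(c_1,\ldots,c_k)_q^{-1}\, J \;=\; M(c_k,c_{k-1},\ldots,c_1)_{q^{-1}}.$$
Taking traces, and using Cayley--Hamilton for $2\times 2$ matrices, $\Tr M^{-1}=(\det M)^{-1}\Tr M$, together with $\det M(c_1,\ldots,c_k)_q = q^{D}$ where $D=\sum_i c_i-k=\deg R$, I obtain
$$R(c_1,\ldots,c_k)_q \;=\; q^{D}\,R(c_k,\ldots,c_1)_{q^{-1}}.$$
An entirely analogous identity holds for $R^+$ via the matrices $M^+$.

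To deduce palindromicity of $R(c_1,\ldots,c_k)_q$ itself from this, I still need the \emph{reversal invariance}
$$R(c_1,\ldots,c_k)_q = R(c_k,\ldots,c_1)_q,$$
which combined with the previous display gives $R(q)=q^D R(q^{-1})$. I would prove this reversal invariance by induction on $k$, expanding the tridiagonal determinant defining $E_k$ (or equivalently using the trace-of-product presentation and cyclic invariance to reduce to smaller cases), or, more cleanly, by producing a closed formula of Euler--Minding type (as flagged in Section \ref{fin}) that is manifestly symmetric under the involution $(c_1,\ldots,c_k)\mapsto(c_k,\ldots,c_1)$.

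\emph{Main obstacle.} The matrix identity only relates $R(\vec c)_q$ to $R(\mathrm{rev}(\vec c))_{q^{-1}}$; palindromicity of a single polynomial therefore requires the independent reversal symmetry, which is the delicate combinatorial input. The positivity of $R$ (hidden behind a subtraction in the definition) and the reversal invariance are the two genuinely nontrivial points; both are naturally resolved by the annulus-loop model of Section \ref{rotund}, which realizes $R$ as a nonnegative area generating function whose index set is invariant under reversing the orientation of loops.
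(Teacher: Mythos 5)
This theorem is not proved in the paper: it is imported from \cite{LMGadv}, and the text immediately after it tells you exactly where the difficulty lies, namely in the reversal identities $\Tr M(c_1,\ldots,c_k)_q=\Tr M(c_k,\ldots,c_1)_q$ and $\Tr M^+(a_1,\ldots,a_n)_q=\Tr M^+(a_n,\ldots,a_1)_q$, which the authors explicitly describe as ``not as immediate to establish as in the case $q=1$'' and attribute to \cite[Lemma 3.8]{LMGadv}. Your reduction of palindromicity to these identities is correct and is essentially the cited route: the computation $J\,M(c)_q^{-1}J=M(c)_{q^{-1}}$ checks out (using $[c]_{q^{-1}}=q^{1-c}[c]_q$ and $\det M(c)_q=q^{c-1}$), and it does yield $R(c_1,\ldots,c_k)_q=q^{D}R(c_k,\ldots,c_1)_{q^{-1}}$ with $D=\sum_i c_i-k$. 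But the crux --- reversal invariance of the trace --- is exactly the part you leave unproved, and your two suggested routes do not close it. In particular, the Euler--Minding expansion is \emph{not} manifestly symmetric under $(c_1,\ldots,c_k)\mapsto(c_k,\ldots,c_1)$: the pair $[c_i]_q[c_{i+1}]_q$ is replaced by $-q^{c_i-1}$, which under reversal becomes $-q^{c_{i+1}-1}$, and Section \ref{fin} of the paper itself points out that the reversed algorithm ``would lead to other formulas that should simplify to the same polynomials,'' i.e.\ the symmetry is a nontrivial identity, not a formal one. The induction on the tridiagonal determinant is likewise not routine, since unlike the classical case the matrix in \eqref{KEq} is not symmetric and the individual continuants $E_k$ are genuinely not reversal-invariant; only the trace combination is.

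Two further cautions. First, deferring the positivity of $R(c_1,\ldots,c_k)_q$ (and the reversal symmetry) to the annulus-loop model of Theorem \ref{thmrot} risks circularity: the paper's proof of Theorem \ref{thmrot} invokes Theorem \ref{palin} to pass between the area and coarea generating functions. The coarea identity in Theorem \ref{thmrot} is established independently of palindromicity, so positivity of $R$ can legitimately be extracted from it, but you must say which half you are using; and ``reversing the orientation of loops'' is not an operation available in this model, since the arcs carry a fixed orientation, so it does not obviously produce the reversal symmetry either. Second, for the palindromicity conclusion you also need that $\deg R=D$ and that the constant term is nonzero, which requires a one-line check that the leading term of $E_k$ (namely $q^{D}$) is not cancelled by $q^{c_k-1}E_{k-2}$; this holds because $c_1\geq 2$, but it should be stated. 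In short: the skeleton is right and matches the cited argument, but the load-bearing step (reversal invariance of the $q$-deformed traces) is asserted rather than proved, and the paper warns that it is precisely the non-obvious point.
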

The above theorem is based on the equalities obtained by reversing the sequences of parameters
$$
\Tr M (c_1,\ldots,c_k)_q=\Tr M (c_k,\ldots,c_1)_q, \text{ and }\; \Tr M^+ (a_1,\ldots,a_n)_q =\Tr M^+ (a_n,\ldots,a_1)_q
$$
which are not as immediate to establish as in the case $q=1$, see \cite[Lemma 3.8]{LMGadv}.
\begin{ex} The rotundi associated with the continued fraction expansions of $\frac75$ are
$$
\begin{array}{lclcl}
R^+(1,2,1,1)_q&:=&\Tr 
\begin{pmatrix}
q+q^2+2q^3+2q^4+q^5&1+q+q^2+q^3\\[6pt]
q+q^2+2q^3+q^4&1+q+q^2
\end{pmatrix}\\[16pt]&=&1+2q+2q^2+2q^3+2q^4+q^5
\\[20pt]
R (2,2,3)_q&:=&\Tr 
\begin{pmatrix}
1+q+2q^2+2q^3+q^4&-(q^2+q^3+q^4)\\[6pt]
1+q+2q^2+q^3&-(q^2+q^3)
\end{pmatrix}\\[16pt]&=&1+q+q^2+q^3+q^4.
\end{array}
$$
\end{ex}

\subsection{Triangulations of annuli}
We use the terminology of \cite{FST} for general triangulated surfaces, and the notation of \cite{BaAl2} in the case of triangulated annuli.
An annulus is the region bounded by two concentric circles. We denote by $C_{\ell,k}$ an annulus with $\ell$ marked points on the outer circle and $k$ marked points on the inner circle. The marked points may be connected with (oriented) arcs. There are \textit{boundary arcs} which connect two consecutive marked points along the boundary circles. There are \textit{bridging arcs} which connect two marked points on different boundary circles. And there are \textit{peripheral arcs} which connect two marked points on the same boundary. 
 A triangulation of an annulus is a maximal collection of arcs (up to homotopy) that do not intersect in the interior of the annulus.
 Boundary arcs always belong to a triangulation. In our situation, the triangulations will not involve peripheral arcs and all arcs will be oriented. A triangle is a closed region in the annulus bounded by three connected arcs.
 
We define triangulations of annuli associated to the sequences $(a_1,a_2,\ldots,a_{2m})$  of positive integers and $(c_1,\ldots,c_k)$ of integers greater than 1.
 
 \begin{itemize}
\item Let  $\T^-(c_1,\ldots,c_k)$ be the oriented triangulation of $C_{n-k-3,k}$ obtained from the fan triangulation $\T$ by gluing the triangle $\{0,1, n-1\}$ with the triangle $\{k, k+1, k+2\}$, the latter imposes the orientation. More precisely, vertices $0,1, n-1$ are respectively glued on vertices $k, k+1, k+2$. In the resulting triangulated annulus, the inner boundary has marked points numbered from $1$ to $k$ anticlockwise, and the outer boundary has marked points numbered from $k+2$ to $n-1$ clockwise.
\item Let $\T^+(a_1,a_2,\ldots,a_{2m})$ be the oriented triangulation of $C_{n-k-2,k}$ obtained from the fan triangulation $\T$ by gluing  the edge joining $1$ and $0$ with the edge joining $k+1$ and $k+2$ so that 1 is glued on $k+1$ and $0$ on $k+2$ without flipping.  The inner boundary has marked points labeled $1, \ldots, k$ anticlockwise, and the outer boundary has marked points labeled $k+2, \ldots, n$ clockwise.
\end{itemize}

 \begin{ex}  For the sequences $(a_1,a_2,\ldots,a_{2m})=(1,2,1,1)$ and $(c_1,\ldots,c_k)=(2,2,3)$ one obtains the following triangulations of annuli:
\begin{center}
\includegraphics[width=10cm]{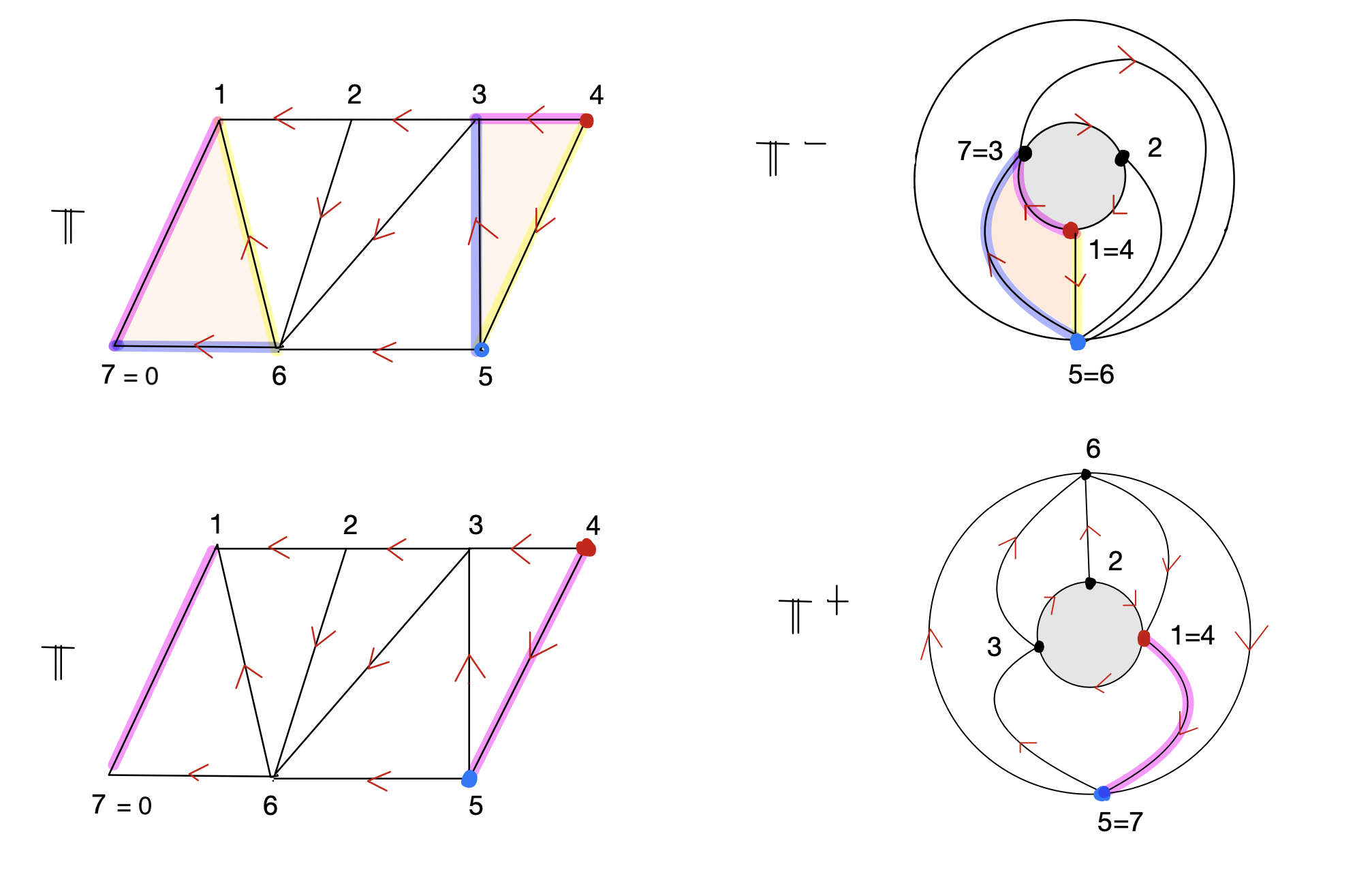}
\end{center}
\end{ex}

 \begin{rem}\label{t+t-}
In  $\T^{-}(c_1,\ldots,c_k)$  one recovers the sequence $(c_1,\ldots,c_k)$, up to cyclic permutation, as the quiddity sequence attached to the points of the inner boundary. In $\T^{+}(a_1,a_2,\ldots,a_{2m})$  one recovers the sequence  $(a_1,a_2,\ldots,a_{2m})$ by the alternating sequences of consecutive triangles with base on the outer/inner boundaries.
One has the relation $\T^{+}(a_1,a_2,\ldots,a_{2m})=\T^{-}(c_1+1,c_2, \ldots,c_k)$.
\end{rem}

 In the triangulations we consider oriented closed loops with no self-crossing. They are loops obtained by concatenation of oriented connected arcs given in the triangulation. For a closed loop $\gamma$ in the triangulated annulus we define the \textit{area} and the \textit{coarea} by
  \begin{eqnarray}\label{ardef3}
\ar(\gamma)&:=&\#\{\text{triangles enclosed between $\gamma$ and the inner boundary}\}\\
\label{coardef3}
\coar(\gamma)&:=&\#\{\text{triangles enclosed between $\gamma$ and the outer boundary }\}
\end{eqnarray}
 
 We are now ready to state the main result concerning enumerative interpretations of the $q$-rotundi. 
 The proof is postponed to section \ref{pf2}.
 \begin{thm}\label{thmrot}
Let $(a_1,a_2,\ldots,a_{2m})$ be a sequence of positive integers and let $(c_1,\ldots,c_k)$ be a sequence of integers greater than 1. One has
\begin{equation*}
\begin{array}{lclclcl}
(i) && R^{+}_{2m}(a_1,\ldots,a_{2m})_{q}&=&\displaystyle\sum_{\gamma \text{ in } \T^{+}}q^{\ar(\gamma)}
&=&\displaystyle\sum_{\gamma \text{ in } \T^{+}}q^{\coar(\gamma)}\;,\\[20pt]
(ii) && R_{k}(c_1,\ldots,c_k)_q&=&\displaystyle\sum_{\gamma \text{ in } \T^{-}}q^{\ar(\gamma)}
&=&\displaystyle\sum_{\gamma \text{ in } \T^{-}}q^{\coar(\gamma)}\;,\\
\end{array}
\end{equation*}
where the sums run over all oriented closed loops in the triangulations $\T^+=\T^{+}(a_1,a_2,\ldots,a_{2m})$ and $\T^-=\T^{-}(c_1,\ldots,c_k)$, respectively.
 \end{thm}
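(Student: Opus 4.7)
The plan is to reduce both parts of Theorem \ref{thmrot} to the path interpretations of $q$-continuants established in Proposition \ref{intcont} and Remark \ref{intcont2}, by first exploiting the trace formulas \eqref{qRotsEq}. Once the coarea-generating interpretation is proven, the equality with the area-generating one will follow from the palindromicity of the rotundi (Theorem \ref{palin}).

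For part (ii) I begin with
\begin{equation*}
R(c_1,\ldots,c_k)_q = E_k(c_1,\ldots,c_k)_q - q^{c_k-1}E_{k-2}(c_2,\ldots,c_{k-1})_q,
\end{equation*}
and rewrite each term via Proposition \ref{intcont} and Remark \ref{intcont2} as a coarea-weighted sum over paths in the fan-triangulated polygon $\T$: paths $\pi:k+1\to 1$ for the first summand, and paths $\pi:k\to 0$ for the second. Under the identifications $1\sim k+1$ and $0\sim k$ defining $\T^-(c_1,\ldots,c_k)$, both families of paths close up to oriented walks in the annulus. The central combinatorial claim is that paths $\pi:k+1\to 1$ enumerate all simple oriented closed loops of $\T^-$ together with a family of \emph{self-crossing} closed walks, and that $q^{c_k-1}\sum_{\pi:k\to 0}q^{\coar(\pi)}$ enumerates exactly those self-crossing walks. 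The shift $q^{c_k-1}$ should be read as the coarea contributed by the $c_k-1$ triangles a walk crosses when it enters the glued triangle $\{k,k+1,k+2\}=\{0,1,n-1\}$ on its ``wrong'' side.

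For part (i) I plan to use the identity $\T^+(a_1,\ldots,a_{2m})=\T^-(c_1+1,c_2,\ldots,c_k)$ from Remark \ref{t+t-} to reduce to part (ii) after checking that the corresponding rotundi agree; alternatively, one interprets the trace $R^+_{2m}=qK_{2m}+\widetilde K_{2m-2}$ via Proposition \ref{intcont} and Remark \ref{intcont2} as a sum (with no minus signs) that splits the simple loops of $\T^+$ into two classes according to how they cross the glued edge, the absence of signs reflecting the absence of minus entries in $M^+$. The area-versus-coarea equality is then immediate: the induced orientations on $\T^\pm$ admit only non-contractible closed loops, so $\ar(\gamma)+\coar(\gamma)$ equals the total number $T$ of triangles of the annulus, giving
\begin{equation*}
\sum_\gamma q^{\ar(\gamma)} = q^T\sum_\gamma q^{-\coar(\gamma)} = q^T R(q^{-1}) = R(q)
\end{equation*}
by palindromicity, and similarly for $R^+$.

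The main obstacle is the bijective work in part (ii): characterizing precisely which paths $\pi:k+1\to 1$ in $\T$ produce self-crossing walks in $\T^-$, and constructing a weight-preserving (up to the $q^{c_k-1}$ shift) bijection between them and paths $\pi':k\to 0$. Since $\T$ itself is a polygon, any self-intersection of the resulting closed walk can only be created by the identifications, so the analysis localizes near the glued triangle. Still, one must track how the oriented edges incident to the three identified vertices $1\sim k+1$, $k\sim 0$, $n-1\sim k+2$ are matched, and verify that the inclusion-exclusion over the two path families exactly sweeps out the simple oriented loops of $\T^-$.
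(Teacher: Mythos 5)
There is a genuine gap, and it sits exactly where you flag it: the ``central combinatorial claim'' of your part (ii) --- that the paths $\pi:k+1\to 1$ in $\T$ descend to the simple oriented loops of $\T^-$ together with a family of self-crossing closed walks, and that $q^{c_k-1}\sum_{\pi:k\to 0}q^{\coar(\pi)}$ enumerates precisely those self-crossing walks --- is not an obstacle to be cleared later, it \emph{is} the theorem for part (ii). You give no definition of which walks are ``self-crossing'', no bijection, and no verification that the coareas match up to the shift $q^{c_k-1}$. The difficulty is real: the gluing defining $\T^-$ identifies an entire triangle ($0\sim k$, $1\sim k+1$, $n-1\sim k+2$, together with its three edges), so a path in the polygon can meet the identified vertices in several inequivalent ways, and because of the minus sign in $R=E_k-q^{c_k-1}E_{k-2}$ the argument must be a genuine cancellation, not just a partition. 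Nothing in your write-up establishes this.

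The paper avoids this cancellation entirely by running the logic in the opposite direction. It proves part (i) first, directly: since $R^+_{2m}=qK_{2m}+\widetilde K_{2m-2}$ has \emph{no} minus sign, the loops of $\T^+$ split cleanly into those passing through vertex $1$ (in bijection with paths $k+1\to 1$ in $\T$, each picking up the extra triangle $t_0$ in its coarea, whence the prefactor $q$) and those not passing through vertex $1$, which necessarily pass through $k+2$ (in bijection with paths $k+2\to 0$, with unchanged coarea). Part (ii) then follows with no further combinatorics from the matrix identity $M^+(a_1,\ldots,a_{2m})_q=M(c_1,\ldots,c_k)_q R_q$, which gives $\Tr M^+(a_1,\ldots,a_{2m})_q=\Tr M(c_1+1,c_2,\ldots,c_k)_q$, combined with $\T^+(a_1,\ldots,a_{2m})=\T^-(c_1+1,c_2,\ldots,c_k)$. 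You mention this sign-free argument for (i) only as an ``alternative''; it is in fact the load-bearing step, and your primary plan inverts the order so as to attack the hardest case head-on without resolving it. (Your area/coarea reduction via $\ar(\gamma)+\coar(\gamma)=T$ and palindromicity is fine and agrees with the paper's justification, modulo checking that $\deg R$ equals the number $T$ of triangles of the annulus.)
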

 Note that the sums involving the area and coarea coincide due to the palindromicity property mentioned in Theorem \ref{palin}.
 
 In the case $q=1$ one immediately gets the following corollary.
 
 \begin{cor}\label{corot}
 Let $(a_1,a_2,\ldots,a_{2m})$ be a sequence of positive integers and let $(c_1,\ldots,c_k)$ be a sequence of integers greater than 1.
 The rotundus $R^{+}_{2m}(a_1,\ldots,a_{2m})$ is the total number of closed loops in $\T^{+}(a_1,a_2,\ldots,a_{2m})$ and the rotundus
 $R_{k}(c_1,\ldots,c_k)$ is the total number of closed loops in $\T^{-}(c_1,\ldots,c_k)$.
 \end{cor}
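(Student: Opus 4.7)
My plan is to prove the coarea-generating function identity first, and then deduce the area version via palindromicity. Starting from the trace definitions \eqref{qRotsEq}, Proposition \ref{intcont} and Remark \ref{intcont2} rewrite the two rotundi as signed sums over coarea-weighted paths in $\T$:
\begin{equation*}
R^+(a_1,\ldots,a_{2m})_q = q\!\!\sum_{\pi\colon k+1\to 1}\!\!q^{\coar(\pi)} + \!\!\sum_{\pi\colon k+2\to 0}\!\!q^{\coar(\pi)},\quad R(c_1,\ldots,c_k)_q = \!\!\sum_{\pi\colon k+1\to 1}\!\!q^{\coar(\pi)} - q^{c_k-1}\!\!\sum_{\pi\colon k\to 1}\!\!q^{\coar(\pi)}.
\end{equation*}
It then suffices to show that these right-hand sides equal $\sum_\gamma q^{\coar(\gamma)}$ taken over closed loops in $\T^+$ and $\T^-$ respectively.

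Case (i) should follow from a fairly direct bijection. Since $\T$ carries no directed cycles (all orientations flow leftward or downward), any simple closed loop in $\T^+$ must traverse the gluing chord exactly once. Cutting along this chord reconstructs $\T$ as a polygon, and the loop unfolds into a directed path from one side of the cut to the other. Classifying loops by which of the two vertex identifications $[1]=[k+1]$ or $[0]=[k+2]$ the loop uses as its chord endpoint, the resulting path is a $k+1\to 1$ path or a $k+2\to 0$ path in $\T$. The $q$-multiplier on the first term accounts for the extra unit of coarea contributed by the triangle $\{0,1,n-1\}$ of $\T$, which is excluded from the coarea count of paths by convention but lies between such a loop and the outer boundary in $\T^+$. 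Case (ii) is considerably more delicate: in $\T^-$ the three vertex identifications $0\equiv k$, $1\equiv k+1$, $n-1\equiv k+2$ and the three edge identifications of the glued triangles create degenerate configurations, with the consequence that some paths $k+1\to 1$ in $\T$ fail to close into valid (directed, simple) loops in $\T^-$ (for instance, one may be forced to traverse a glued edge against its imposed orientation), while simultaneously some simple loops in $\T^-$ arise only by combining multiple path segments across different identifications. The minus sign in the trace formula records the inclusion-exclusion that corrects this mismatch; the main obstacle of the proof is to construct an explicit cancellation --- either a sign-reversing involution or a direct bijection --- matching the ``excess'' paths of the first sum with the paths of the second sum shifted by $q^{c_k-1}$. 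I expect to accomplish this either by a local analysis near the identified triangle $\{k,k+1,k+2\}\equiv\{0,1,n-1\}$, or by induction on $k$ using the continuant recurrence $E_k=[c_k]_q E_{k-1}-q^{c_{k-1}-1}E_{k-2}$ together with a surgery that adds a single triangle to $\T^-$.

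For the area identity, observe that any simple closed loop $\gamma$ in $\T^\pm$ separates the annulus into two sub-regions, each containing exactly one boundary circle, so $\ar(\gamma)+\coar(\gamma)=N$ where $N$ is the total number of triangles ($N=n-2$ for $\T^+$, $N=n-3$ for $\T^-$, and equal to $\deg R^\pm$). Theorem \ref{palin} states that $R^\pm$ is palindromic of degree $N$, i.e.\ $R^\pm(q)=q^N R^\pm(q^{-1})$. Combining this with the coarea identity from the previous paragraph gives
\begin{equation*}
\sum_\gamma q^{\ar(\gamma)} = \sum_\gamma q^{N-\coar(\gamma)} = q^N R^\pm(q^{-1}) = R^\pm(q) = \sum_\gamma q^{\coar(\gamma)},
\end{equation*}
which completes the proof.
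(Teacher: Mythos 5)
Your reduction of the corollary to the coarea-generating identities of Theorem \ref{thmrot}, and your treatment of part (i), essentially coincide with the paper's argument: loops in $\T^{+}$ through the identified vertex $[1]=[k+1]$ unfold to paths $k+1\to 1$ in the fan triangulation $\T$ and pick up one extra unit of coarea from the triangle $t_0=\{0,1,n-1\}$, while the remaining loops, which pass through $[0]=[k+2]$, unfold to paths $k+2\to 0$ with unchanged coarea; the passage from the coarea statement to the area statement via palindromicity is also the paper's observation, which you spell out correctly.

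The genuine gap is part (ii). You correctly identify that the signed expression $E_{k}(c_1,\ldots,c_k)_q-q^{c_k-1}E_{k-2}(c_2,\ldots,c_{k-1})_q$ does not visibly match a positive count of loops in $\T^{-}$, but you then only \emph{announce} two candidate strategies (a sign-reversing involution near the glued triangle, or an induction on $k$ using the continuant recurrence) without carrying either one out; as written, the second half of the corollary is unproved. The paper sidesteps the inclusion-exclusion entirely: from the matrix identity $M^{+}(a_1,\ldots,a_{2m})_q=M(c_1,\ldots,c_k)_q R_q$ with $R_q=\begin{pmatrix} q&1\\ 0&1\end{pmatrix}$ one gets $\Tr M^{+}(a_1,\ldots,a_{2m})_q=\Tr M(c_1+1,c_2,\ldots,c_k)_q$, and combining this with the identification $\T^{+}(a_1,\ldots,a_{2m})=\T^{-}(c_1+1,c_2,\ldots,c_k)$ of Remark \ref{t+t-} makes part (ii) an immediate consequence of part (i). Unless you supply that reduction (or actually construct the cancellation you allude to), your argument establishes only the $R^{+}$ half of the statement.
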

 \begin{ex}
Going back to the example of $(a_1,a_2,\ldots,a_{2m})=(1,2,1,1)$ and $(c_1,\ldots,c_k)=(2,2,3)$, we obtain 10 closed loops in $\T^+$:

\includegraphics[width=15cm]{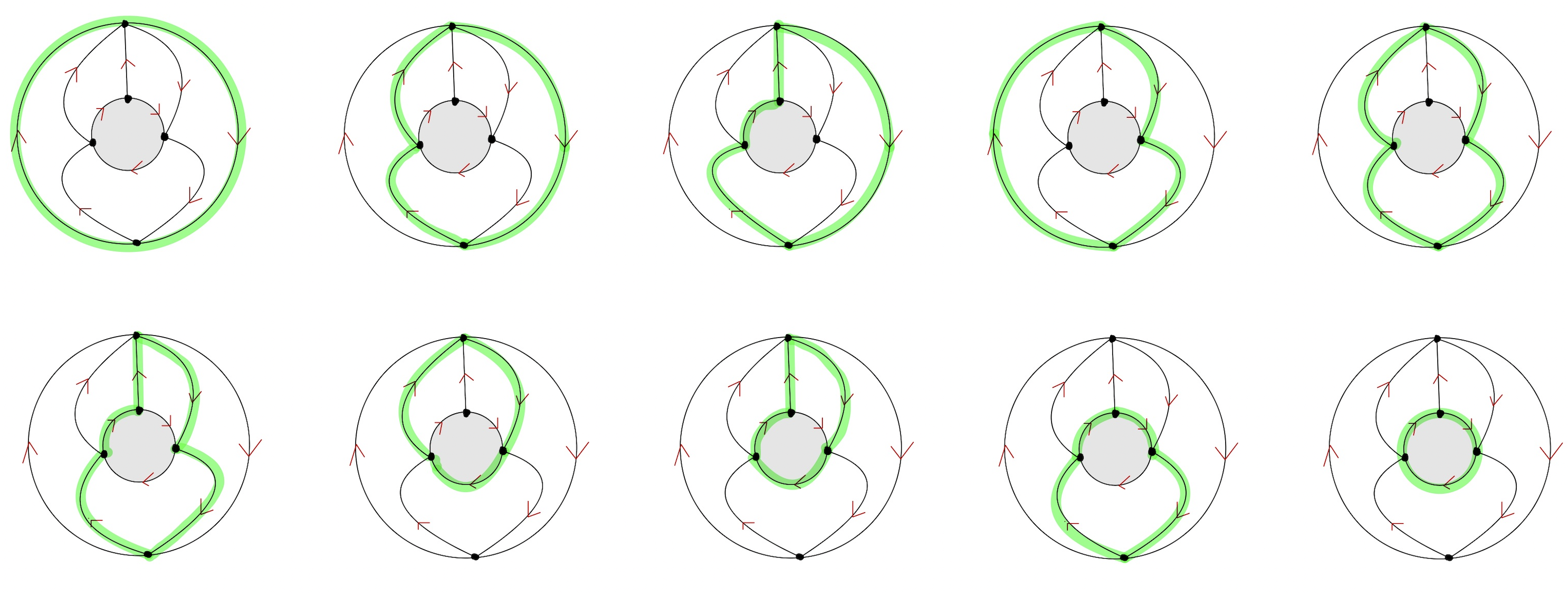}

The generating function for the area (or coarea) of these loops is $1+2q+2q^2+2q^3+2q^4+q^5$ which coincides with the $q$-rotundus $R^+_n (1,2,1,1)_q$.

We obtain 5 closed loops in $\T^-$:
\begin{center}
\includegraphics[width=15cm]{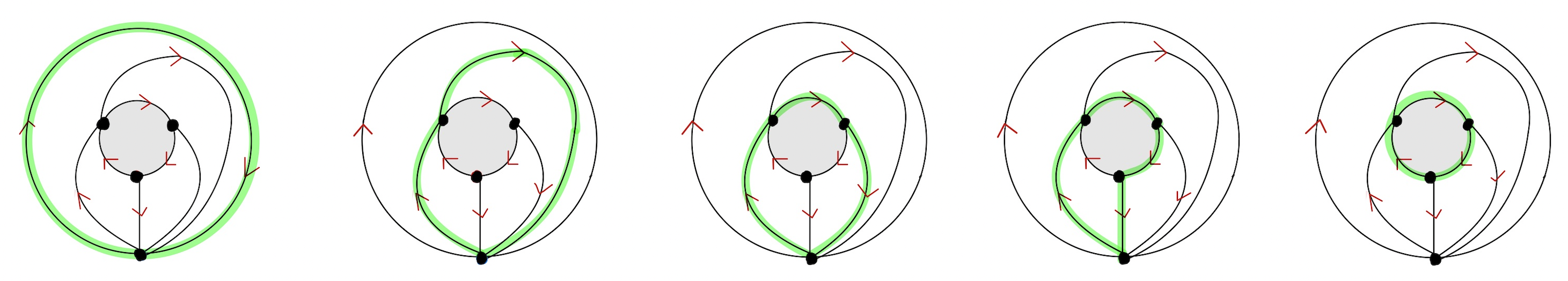}
\end{center}
The generating function for the area (or coarea) of theses loops is $1+q+q^2+q^3+q^4$ which coincides with the $q$-rotundus $R_k(2,2,3)_q$.
\end{ex}

\subsection{Proof of Theorem \ref{thmrot}}\label{pf2}
By Theorem \ref{palin} a formula with the area function is equivalent to the formula with the coarea function. We will use the coarea.

Let us start by proving part (i) of the theorem. By definition and by  Proposition \ref{intcont} and Remark \ref{intcont2} one has
\begin{equation*}
\begin{array}{lclclcl}
R^+_{2m}(a_1,\ldots,a_{2m})_q&=&qK_{2m}(a_1,\ldots,a_{2m})_q&+&
\widetilde{K}_{2m-2} (a_2,\ldots,a_{2m-1})_q\\[8pt]
&=&
\displaystyle q\sum_{\pi\,:\,k+1\to1}q^{\coar(\pi)}&+&\displaystyle\sum_{\pi\,:\,k+2\to0}q^{\coar(\pi)},
\end{array}
\end{equation*}
where the paths $\pi$ lie in the fan triangulation $\T$ associated with the sequences $(a_1,\ldots,a_{2m})$.

Each oriented closed  loop $\gamma$ in $\T^{+}$ gives rise to a path $\pi_\gamma$ in the fan triangulation $\T$, see Figure \ref{pathcurveex} at the end of the proof for an illustration. 
The coareas $\coar(\gamma)$ and $\coar(\pi_\gamma)$ either agree or differ by one. The triangle $t_0$ over the vertices $\{0, 1, n-1\}$ is never enclosed by a path in $\T$ but in can be enclosed or not by a loop in $\T^{+}$. There are two types of oriented closed loops:
\begin{small}
$$
\begin{array}{cccc}
\{\text{ loops in } \T^+\}=&
\{\text{ loops passing through vertex 1 } \} &\sqcup &
\left\{
\begin{array}{cc}
\text{ loops  passing through vertex  } k+2\\
\text{ and not passing through vertex 1} 
\end{array}
\right\}\\[8pt]
&\updownarrow&&\updownarrow\\[8pt]
&\{\text{ paths } k+1\to 1 \text{ in } \T\}
&&\{\text{ paths } k+2\to 0 \text{ in } \T\}
\end{array}
$$
\end{small}
If $\gamma$ passes through vertex 1, then the corresponding path $\pi_\gamma$ in $\T$ goes from vertex $k+1$ to vertex 1. The loop $\gamma$ as well as path $\pi_\gamma$ do not use the boundary arc or edge connecting vertices $n-1$ and $0$. All the triangles contributing in $\coar(\pi_\gamma)$ will contribute in $\coar(\gamma)$ but in addition the triangle $t_0$ will also contributes in $\coar(\gamma)$ . Hence, one has 
$$
q^{\coar(\gamma)}=q^{\coar(\pi_\gamma)+1},
$$
for all loops $\gamma$ passing through vertex 1.

If $\gamma$ does not passes through the vertex $1$ then it necessarily passes through the vertex $k+2$. The corresponding path $\pi_\gamma$ in $\T$ goes from vertex $k+2$ to vertex 0. The triangle $t_0$ will be enclosed by the loop $\gamma$ and will not contribute in $\coar(\gamma)$.
Hence, one has 
$$
q^{\coar(\gamma)}=q^{\coar(\pi_\gamma)},
$$
for all loops $\gamma$ not passing through vertex 1.

Finally we deduce 
\begin{eqnarray*}
\displaystyle\sum_{\gamma \text{ in } \T^{+}}q^{\coar(\gamma)}&=&
\displaystyle\sum_{\gamma \text{ through } 1}q^{\coar(\gamma)}+\displaystyle\sum_{\gamma\text{ not through }1}q^{\coar(\gamma)}\\[8pt]
&=& \displaystyle q\sum_{\pi\,:\,k+1\to1} q^{\coar(\pi)}+\displaystyle\sum_{\pi\,:\,k+2\to0}q^{\coar(\pi)}\\[8pt]
&=&R^+_{2m}(a_1,\ldots,a_{2m})_q
\end{eqnarray*}
Part (i) is proved.

To prove Part (ii) we use the following matrix relation taken from \cite[Prop 4.9]{MGOfmsigma}
$$
M^+_{2m}(a_1,\ldots,a_{2m})_q=M_k (c_1,\ldots,c_k)_qR_q,
$$
where $R_q=\begin{pmatrix} q&1\\0&1\end{pmatrix}$.
Taking the traces one gets
$$
\Tr M^+_{2m}(a_1,\ldots,a_{2m})_q=\Tr R_qM_k (c_1,\ldots,c_k)_q= \Tr M_k (c_1+1,c_2,\ldots,c_k)_q,
$$
and the result follows from $\T^{+}(a_1,a_2,\ldots,a_{2m})=\T^{-}(c_1+1,c_2,\ldots,c_k)$, see Remark \ref{t+t-}. Theorem \ref{thmrot} is proved.

\begin{figure}[h!]
\begin{center}
\includegraphics[width=12cm]{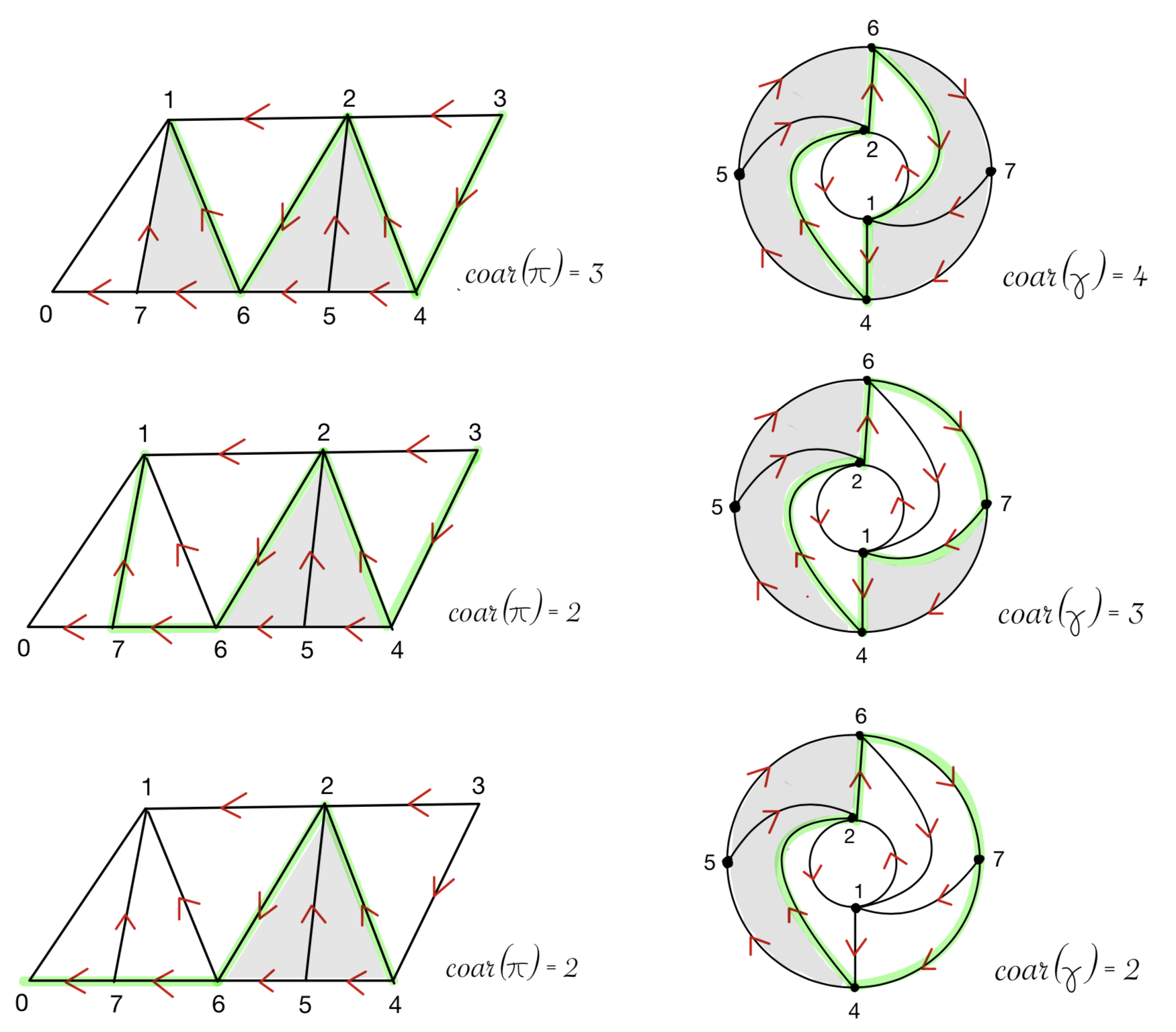}
\end{center}
\caption{Examples of closed loops in a triangulated annulus $\T^+$ with the corresponding paths in the associated triangulated polygon $\T$. The coarea of the curve and of the path are shaded in gray.}\label{pathcurveex}
\end{figure}

\section{Miscellaneous }\label{fin}

In this section we give extra formulas for the rotundi related to other combinatorial models or generalizing previous results.
\subsection{Matchings}
In the triangulated annulus $\T^-(c_1,\ldots, c_k)$ the vertices on the inner boundary are numbered from $1$ to $k$. A \textit{matching} in $\T^-(c_1,\ldots, c_k)$ is a $k$-tuple $(t_1, t_2, \ldots, t_k)$ of distinct triangles in $\T^-(c_1,\ldots, c_k)$ such that the triangle $t_i$ is incident to vertex $i$.
In \cite{BCI} a formula for the continuant $E_k(c_1, \ldots,c_k)$ is given in terms of matchings, see also \cite{BPT}. This formula implies the following formula for the rotundus
$$R_k(c_1, \ldots,c_k)=\#\{ \text{matchings in } \T^-(c_1,\ldots, c_k)\}.$$
This result does not involve the orientation of the triangulation unlike the result of Corollary~\ref{corot}.
\begin{ex}
In $\T^-(2,2,3)$ there are three points in the inner boundary and four triangles denoted $a, b, c, d$ as in the picture. One finds 5 matchings. This number coincides with $R(2,2,3)=5$.
$$
\begin{array}{cc}
\begin{array}{cc}
\includegraphics[width=3.4cm]{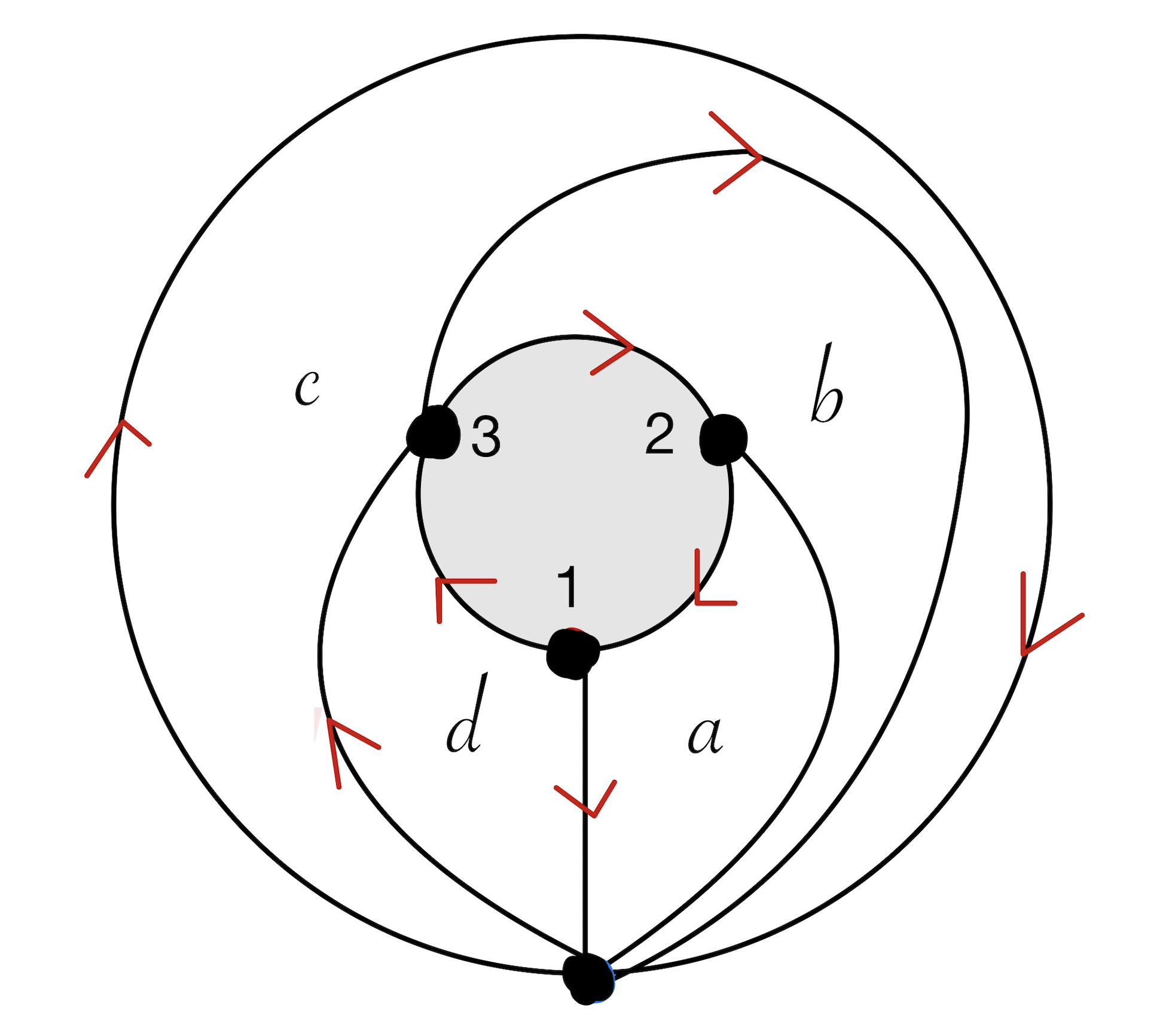}
\end{array}&
\begin{array}{cc}
&\text{matchings: }\\[6pt]
&(a,b,c)\\
&(a,b,d)\\
&(d,a,b)\\
&(d,a,c)\\
&(d,b,c)
\end{array}
\end{array}
$$
\end{ex}

\subsection{Dual graphs}
The dual graph associated 
with a triangulated polygon or triangulated annulus is defined in the following way. Each triangle is represented by a vertex and two vertices are linked by an edge if the corresponding triangles are adjacent. In our situation the dual graphs are oriented according to the orientation in the triangulations. The cyclic graph is not a full cycle, it has at least one source and at least one sink. 
Cyclic graphs from quiddity sequences have already appear in \cite[\S 3.2]{BaAl2}.

For instance, in the case of $\T^+(1,2,1,1)$, we get the following dual graphs
\begin{center}
\includegraphics[width=8cm]{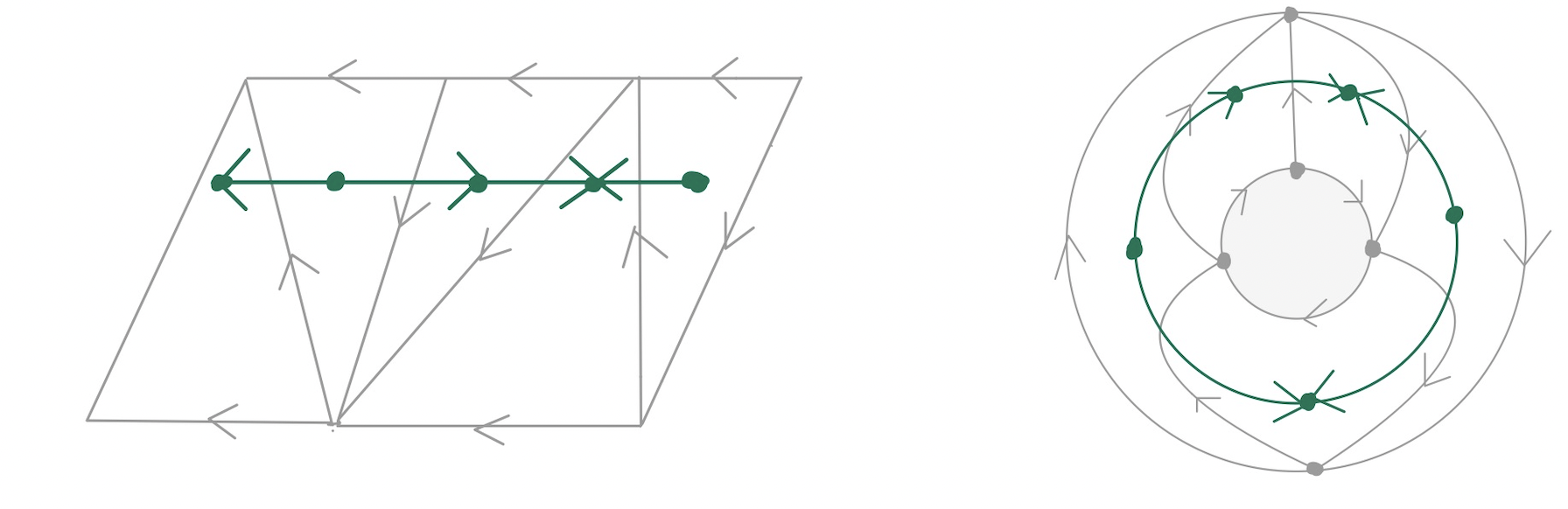}
\end{center}
The $q$-continuants have interpretations using the closures of the graph associated with the triangulated polygon, see \cite[\S3]{MGOfmsigma}. In particular Theorem 4 in \cite[\S3]{MGOfmsigma} implies a similar statement for the $q$-rotundus using the closures of the cyclic graph associated with the triangulated annulus:
$$
R^{+}(a_1,a_2,\ldots,a_{2m})_q=\sum_{C \in \G} q^{\# C},
$$
where the sum runs over all the closures $C$ of the cyclic dual graph $\G$ of $\T^{+}(a_1,a_2,\ldots,a_{2m})$.

This can be also formulated in terms of ranks of ideals in some posets (fence posets and circular posets), see \cite{McCSS}, \cite{OgRa}, \cite{Og}.
\subsection{Pfaffians}Recall that the determinant of a skew-symmetric matrix can always be written as the square of a polynomial expression in the entries of the skew-symmetric matrix. This polynomial expression is called the \textit{Pfaffian} of the matrix.
It is proved in \cite{CoOv2} that the rotundus  is the Pfaffian of a bigger skew-symmetric matrix. 
This can be generalized. 

The 
 $q$-rotundus is the Pfaffian of a $2k\times 2k$ skew-symmetric matrix:
\begin{equation*}
\label{TheOmEq}
\begin{array}{l}
R_k(c_1,\ldots,c_{k})_{q}^2=\\[6pt]
\qquad\det
\left(
\begin{array}{cccc|cccccc}
&&&\quad1&[c_1]_{q}&1&\\[4pt]
&&&&q^{c_{1}-1}&[c_2]_{q}&1&\\[4pt]
&&&&&q^{{c_{2}-1}}&\ddots&\ddots\\[4pt]
&&&&&&\ddots&\ddots&1\\[4pt]
-1&&&&&&&\ddots&[c_{k}]_{q}\\[6pt]\hline
&&&&&&&&\\[-8pt]
-[c_1]_{q}&-q^{c_{1}-1}&&&&&&&q^{c_{k}-1}\\[4pt]
-1&\ddots&\!\!\!\!\!\!\ddots&&&&&&\\[4pt]
&\ddots&\ddots&\\
&&&\;\;-q^{c_{k-1}-1}&&&&\\[6pt]
&&-1&-[c_k]_{q}&\!\!-q^{c_{k}-1}&&
\end{array}
\right)
\end{array}
\end{equation*}
This is a $q$-analogue of Theorem 1 in \cite{CoOv2} and this can be established using the same proof.

The $q$-rotundus also appears in the determinant of a $2k\times 2k$ symmetric matrix:
\begin{equation*}
\label{TheOmEq2}
\begin{array}{l}
R_k(c_1,\ldots,c_{k})_{q}^2-4q^{\sum_{i}(c_{i}-1)}=\\[10pt]
\qquad (-1)^k\det
\left(
\begin{array}{cccc|cccccc}
&&&\quad1&[c_1]_{q}&1&\\[4pt]
&&&&q^{c_{1}-1}&[c_2]_{q}&1&\\[4pt]
&&&&&q^{{c_{2}-1}}&\ddots&\ddots\\[4pt]
&&&&&&\ddots&\ddots&1\\[4pt]
1&&&&&&&\ddots&[c_{k}]_{q}\\[6pt]\hline
&&&&&&&&\\[-8pt]
[c_1]_{q}&q^{c_{1}-1}&&&&&&&q^{c_{k}-1}\\[4pt]
1&[c_2]_{q}&q^{{c_{2}-1}}&&&&&&\\[4pt]
&\ddots&\ddots&\\
&&&\!\!\!\!\ddots&&&&\\[6pt]
&&1&[c_k]_{q}&\!\!q^{c_{k}-1}&&
\end{array}
\right)
\end{array}
\end{equation*}
This identity has been checked experimentally with computer assistance for values of $k\leq 5$ and various tuples of  $c_i$'s. We conjecture that the formula holds in general. This would be a $q$-analogue of  the formula in the remark following Theorem 1 in \cite{CoOv2}. We also remark that the quantity $R_k(c_1,\ldots,c_{k})_{q}^2-4q^{\sum_{i}(c_{i}-1)}$ already appears in a formula of Proposition 4.3 of \cite{LMGadv}.

\subsection{Euler-Minding algorithm} The Euler-Minding formula gives the terms in the continuants by removing successively pairs $c_ic_{i+1}$ in the product $c_1c_2\cdots c_k$ see e.g \cite[p.9]{Per}. Conley-Ovsienko introduced a cyclic variant of this algorithm to compute the rotundus, see \cite[p46]{CoOv2}. 

We adapt these algorithms in the case of $q$-continuant and $q$-rotundus.

The $q$-continuant $E_k(c_1,\ldots,c_{k})_{q}$ can be calculated  as the sum of all terms obtained from the product $[c_{1}]_{q}[c_{2}]_{q}\cdots [c_{k}]_{q}$ by replacing all the adjacent pairs $[c_{i}]_{q}[c_{i+1}]_{q}$ by $-q^{c_{i}-1}$. It is possible to remove from 0 to $\lfloor k/2 \rfloor$ pairs at once.
For example,
$$
\begin{array}{rcl}
E_3(c_1,c_2,c_3)_{q}&=&[c_{1}]_{q}[c_{2}]_{q}[c_{3}]_{q}-q^{c_{1}-1}\cancel{[c_{1}]_{q}[c_{2}]_{q}}[c_{3}]_{q}-q^{c_{2}-1}[c_{1}]_{q}\cancel{[c_{2}]_{q}[c_{3}]_{q}}
\\[10pt]
&=&[c_{1}]_{q}[c_{2}]_{q}[c_{3}]_{q}-q^{c_{1}-1}[c_{3}]_{q}-q^{c_{2}-1}[c_{1}]_{q}
\\
\\[10pt]
E_4(c_1,c_2,c_3,c_{4})_{q}&=&
[c_{1}]_{q}[c_{2}]_{q}[c_{3}]_{q}[c_{4}]_{q}
-q^{c_{1}-1}\cancel{[c_{1}]_{q}[c_{2}]_{q}}[c_{3}]_{q}[c_{4}]_{q}
-q^{c_{2}-1}[c_{1}]_{q}\cancel{[c_{2}]_{q}[c_{3}]_{q}}[c_{4}]_{q}
\\[2pt]
&&\;\;-q^{c_{3}-1}{[c_{1}]_{q}}[c_{2}]_{q}\cancel{[c_{3}]_{q}[c_{4}]_{q}}
+
q^{c_{1}+c_{3}-2}\cancel{[c_{1}]_{q}[c_{2}]_{q}}\cancel{[c_{3}]_{q}[c_{4}]_{q}}
\\[10pt]
&=&
[c_{1}]_{q}[c_{2}]_{q}[c_{3}]_{q}[c_{4}]_{q}
-q^{c_{1}-1}[c_{3}]_{q}[c_{4}]_{q}
-q^{c_{2}-1}[c_{1}]_{q}[c_{4}]_{q}
-q^{c_{3}-1}{[c_{1}]_{q}}[c_{2}]_{q}\\[2pt]
&&\;\;
+q^{c_{1}+c_{3}-2}
\end{array}
$$
This algorithm can be deduced from the standard formula expressing the determinant of a $n\times n $-matrix: $\det(a_{i,j})=\sum_{\sigma \in S_n}\mathrm{sgn}(\sigma)a_{1,\sigma(1)}\ldots a_{n,\sigma(n)}$. In the case of the three-diagonal determinant \eqref{KEq} the formula reduces to the set of permutations $\sigma$ in the symmetric group $S_n$  that are product of elementary transpositions $(i, i+1)$ with disjoint supports. This explains the algorithm.

Using \eqref{qRotsEq} we derive a similar algorithm for the $q$-rotundus.
The rotundus $R_k(c_1,\ldots,c_{k})_{q}$ can be calculated  as the sum of all terms obtained from the product $[c_{1}]_{q}[c_{2}]_{q}\cdots [c_{k}]_{q}$ by replacing all the cyclically adjacent pairs $[c_{i}]_{q}[c_{i+1}]_{q}$ by $-q^{c_{i}-1}$. Here $[c_{k}]_{q}[c_{1}]_{q}$ is considered as an adjacent pair and is replaced by $-q^{c_{k}-1}$. It is possible to remove from 0 to $\lfloor k/2 \rfloor$ pairs at once.
For example,
$$
\begin{array}{rcl}
R_3(c_1,c_2,c_3)_{q}&=&[c_{1}]_{q}[c_{2}]_{q}[c_{3}]_{q}-q^{c_{1}-1}\cancel{[c_{1}]_{q}[c_{2}]_{q}}[c_{3}]_{q}-q^{c_{2}-1}[c_{1}]_{q}\cancel{[c_{2}]_{q}[c_{3}]_{q}}
\\[2pt]
&&\;\;-q^{c_{3}-1}\cancel{[c_{1}]_{q}}[c_{2}]_{q}\cancel{[c_{3}]_{q}}\\[10pt]
&=&[c_{1}]_{q}[c_{2}]_{q}[c_{3}]_{q}-q^{c_{1}-1}[c_{3}]_{q}-q^{c_{2}-1}[c_{1}]_{q}
-q^{c_{3}-1}[c_{2}]_{q}\\
\\[10pt]
R_4(c_1,c_2,c_3,c_{4})_{q}&=&
[c_{1}]_{q}[c_{2}]_{q}[c_{3}]_{q}[c_{4}]_{q}
-q^{c_{1}-1}\cancel{[c_{1}]_{q}[c_{2}]_{q}}[c_{3}]_{q}[c_{4}]_{q}
-q^{c_{2}-1}[c_{1}]_{q}\cancel{[c_{2}]_{q}[c_{3}]_{q}}[c_{4}]_{q}
\\[2pt]
&&\;\;-q^{c_{3}-1}{[c_{1}]_{q}}[c_{2}]_{q}\cancel{[c_{3}]_{q}[c_{4}]_{q}}
-q^{c_{4}-1}\cancel{[c_{1}]_{q}}[c_{2}]_{q}[c_{3}]_{q}\cancel{[c_{4}]_{q}}\\[2pt]
&&\;\;+
q^{c_{1}+c_{3}-2}\cancel{[c_{1}]_{q}[c_{2}]_{q}}\cancel{[c_{3}]_{q}[c_{4}]_{q}}
+q^{c_{2}+c_{4}-2}\cancel{[c_{1}]_{q}}\cancel{[c_{2}]_{q}[c_{3}]_{q}}\cancel{[c_{4}]_{q}}\\[10pt]
&=&
[c_{1}]_{q}[c_{2}]_{q}[c_{3}]_{q}[c_{4}]_{q}
-q^{c_{1}-1}[c_{3}]_{q}[c_{4}]_{q}
-q^{c_{2}-1}[c_{1}]_{q}[c_{4}]_{q}
-q^{c_{3}-1}{[c_{1}]_{q}}[c_{2}]_{q}\\[2pt]
&&\;\;
-q^{c_{4}-1}[c_{2}]_{q}[c_{3}]_{q}+q^{c_{1}+c_{3}-2}
+q^{c_{2}+c_{4}-2}
\end{array}
$$
Note that applying this algorithm to $R_k(c_k,\ldots,c_{1})_{q}$ would lead to other formulas that should simplify to the same polynomials in $q$.
For instance for $k=3$ one can check 
$$
[c_{1}]_{q}[c_{2}]_{q}[c_{3}]_{q}-q^{c_{1}-1}[c_{3}]_{q}-q^{c_{2}-1}[c_{1}]_{q}
-q^{c_{3}-1}[c_{2}]_{q}
=[c_{3}]_{q}[c_{2}]_{q}[c_{1}]_{q}-q^{c_{3}-1}[c_{1}]_{q}-q^{c_{2}-1}[c_{3}]_{q}
-q^{c_{1}-1}[c_{2}]_{q}.
$$
\bigskip

\noindent
\textbf{Acknowledgement.} 
SMG is very grateful to Patrick Popescu-Pampu for bringing to her knowledge interesting references, in particular \cite[Prop. 5.21]{PoP}.
The authors would like to thank Valentin Ovsienko and Christophe Reutenauer for stimulating discussions on the subject, Perrine Jouteur for pointing out major typos in the previous version of the article, and the anonymous referees
whose valuable comments helped to greatly improve the presentation of the paper.

\bibliographystyle{acm}
\bibliography{BiblioMoy3,qAnalog,BibFrisesCluster}\end{document}